\journal{Computer-Aided Design Journal}
\theoremstyle{plain}
\newtheorem{theorem}{Theorem}[section]
\newtheorem{lemma}[theorem]{Lemma}
\theoremstyle{definition}
\newtheorem{definition}[theorem]{Definition}
\theoremstyle{remark}
\newtheorem{remark}[theorem]{Remark}
\theoremstyle{assumption}
\newtheorem{assumption}[theorem]{Assumption}
\numberwithin{equation}{section}
\numberwithin{theorem}{section}
\newcommand{\llbracket}{\left[\!\left[}
\newcommand{\rrbracket}{\right] \! \right]}
\newcommand{\polyorder}{k}
\newcommand{\polyfunction}{\phi}
\newcommand{\vertex}{p}
\begin{document}

\begin{frontmatter}

\title{Implementation of Worsey-Farin Splines with Applications to Solution Transfer}

%% Group authors per affiliation:
\author{Logan Larose\corref{mycorrespondingauthor}}

\cortext[mycorrespondingauthor]{Corresponding author}
\ead{lfl5340@psu.edu}

\author{David M. Williams} 

\address{Department of Mechanical Engineering, The Pennsylvania State University, University Park, Pennsylvania 16802}

\address{Laboratories for Computational Physics and Fluid Dynamics, Naval Research Laboratory, Washington, DC 20375}

\fntext[fn1]{Distribution Statement A: Approved for public release. Distribution is unlimited.}

\begin{abstract}
This work primarily focuses on providing full implementation details for Worsey-Farin (WF) spline interpolation over tetrahedral elements. While this spline space is not new and the theory has been covered in other works, there is a lack of explicit and comprehensive implementation details, which we hope to provide. In this paper, we also demonstrate the effectiveness of the WF-spline space through a simple target application: solution transfer. Moreover, we derive an error estimate for the WF spline-based, solution transfer process. We conduct numerical experiments quantifying the conservative nature and order of accuracy of the transfer process, and we present a qualitative evaluation of the visualization properties of the smoothed solution. Additionally, in our study of conservation, we demonstrate how adaptive numerical quadrature rules on the tetrahedron used in conjunction with global $L_2$-projection can improve the conservation of the solution transfer process. 
\end{abstract}

\begin{keyword}
Worsey-Farin splines \sep Solution transfer  \sep Interpolation \sep Approximation \sep Visualization
\end{keyword}

\end{frontmatter}

%%%%%%%%%%%%%%%%%%%%
% INTRODUCTION
%%%%%%%%%%%%%%%%%%%%

\section{Introduction}
\label{sec;introduction}

To date, splines have been rigorously studied, with contributions to the field since at least Schoenberg's coining of the term in 1946~\cite{schoenberg1988contributions}, and perhaps, even earlier developments before the modern terminology began circulating according to Farin~\cite{farin2001curves} and de Boor~\cite{de1986approximation}. The last 60 years of splines have been largely driven by applications, with some of the earliest motivations of spline research arising in the field of mechanical design. Major contributors in this field are de Casteljau for Citro\"en~\cite{de1959outillage}\cite{de1963courbes}, B\'ezier for Renault, de Boor for General Motors~\cite{de1968least}, Sabin for British Aircraft Corporation~\cite{sabin1971existing}, and Ferguson for Boeing~\cite{ferguson1986construction}. Many of these names are mentioned multiple times throughout this paper because of their prolific contributions to the field. The increase in the usage of splines parallels the increased usage of computers as a design tool because of their ability to smoothly join at control points of airfoils and vehicle bodies. Thus, splines are unaffected by the Runge effect that is associated with their predecessor, interpolating-polynomials.

The desirable interpolation properties of splines produced another area of application: the approximation of partial differential equations (PDEs). Much of the aforementioned design work is based on B-splines, a term introduced by Schoenberg for their use as basis functions~\cite{schoenberg1969cardinal}. Bazilevs et al.~\cite{bazilevs2006isogeometric}, Hughes et al.~\cite{hughes2005isogeometric}, and Evans et al.~\cite{evans2009n} are major contributors to the development of IsoGeometric Analysis (IGA), which relies on B-splines. The premise behind IGA was to use the same basis-functions for both CAD representation and PDE approximation to reduce the error introduced by modeling with separate bases for design and analysis. Splines were already a proven design tool prior to this application, however this added credence to their use in approximation. Later developments in IGA introduce variations/specializations of splines to respond to IGA's limitations on handling
complex geometries and local refinement, such as T-splines~\cite{scott2012local}, THB-splines~\cite{giannelli2012thb}, LR-Splines~\cite{dokken2013polynomial}, and S-splines~\cite{li2019s}. More generally, splines have made their way into tensor product approximation spaces such as those in discontinuous Galerkin (DG) methods~\cite{bressan2019approximation}. For a detailed discussion of the use of splines to approximate PDEs on triangulations, see~\cite{speleers2005powell},~\cite{pelosi2017splines} and the references therein. 

Another field of study that opened up in the late 1980s and early 1990s was the smooth visualization of piecewise polynomial functions and derivatives on a mesh. This effort was led by  Farin~\cite{farin1986triangular}, Patrikalakis~\cite{patrikalakis1989representation}, Loop~\cite{loop1990generalized}, 
and Peters~\cite{peters1993smooth}, to name a few. However, this is by no means a comprehensive list and the interested reader should check~\cite{bajaj2017splines} and references therein for a more rigorous review of these developments. 

We end our brief review of the applications of splines by introducing their use for solution transfer. Solution transfer in this context refers to transferring or interpolating the solution on one mesh to another dissimilar mesh. Solution transfer is a well-developed area of scientific computing, see~\cite{farrell2009conservative,farrell2011conservative} and references therein, with linear interpolation being perhaps the most well-known approach. However, 
the poor conservation~\cite{alauzet20073d,alauzet2010p1} of the solution provided by linear transfer has led to conservative interpolation schemes that meet the rising demand for accurate and conservative transfer. For example, Farrell et al.~\cite{farrell2009conservative,farrell2011conservative}, and Alauzet and Mehrenberger~\cite{alauzet2010p1,alauzet2016parallel} offer schemes that outperform linear interpolation on the basis of conservation. In our previous work~\cite{larose2025spline}, we explain these methods in greater detail, as well as introduce spline-based solution transfer. We used Hsieh-Clough-Tocher (HCT) splines to transfer solutions between 2D meshes of triangular elements. Our  transfer strategy achieved acceptable conservation on fine-grids, while also producing an intermediate solution that was smooth and optimal for visualization. We now wish to demonstrate the solution transfer capabilities of Worsey-Farin (WF) splines between 3D meshes of tetrahedral elements, and present the most comprehensive description of the WF interpolation procedure to date. 

\subsection{Literature Review}

Of the notable contributors to the field of splines, we narrow our focus to those whose work is most relevant to the present paper. We begin by considering the work of Farin, who built off of the work of B\'ezier~\cite{bezier1970emploi}\cite{bezier1977essai}, De Casteljau~\cite{de1959courbes}, and Sabin~\cite{sabin1977use} to produce B\'ezier polynomials over triangles and generalized theory on $\mathcal{C}^r$ piecewise polynomials~\cite{farin1980bezier}. Farin showed that the general relation of polynomial degree $d$ to desired smoothness $r$ is $d = 4r+1$ on triangulations,  (see~\cite{alfeld1987minimally}~\cite{lai2007spline} for additional details). This relation allows for locally constructed elements which are stable; however as $r$ increases, the associated high-degree polynomials are not favorable from an implementation standpoint. This is why subdividing (splitting) a triangular element---often referred to as a triangular macroelement---into smaller triangular elements is advantageous, as it reduces the order of the polynomial necessary to achieve the same degree of smoothness. Farin's work is one of multiple camps of development occurring in the late 1970s and early 1980s which established interpolating splines over triangles. Elsewhere, in France, Bernadou and Hassan built upon the work of Argyis et al.~\cite{argyris1968tuba} and the split provided by Hsieh, Clough, and Tocher~\cite{clough1965finite} to produce basis functions for a $\mathcal{C}^1$ class of finite elements~\cite{bernadou1980triangles}; in England, Powell and Sabin developed a different (alternative) triangular split which enabled $\mathcal{C}^1$ approximation~\cite{powell1977piecewise}. Powell and Sabin's method uses quadratic polynomials for interpolation, but requires splitting a triangle into 6 subtriangles, meanwhile Bernadou and Hassan's implementation of HCT elements requires splitting a triangle into 3 subtriangles and using cubic polynomials to form the interpolant.  

Many of the developments for subdividing triangular elements were rapidly extended to 3D tetrahedral elements. This was motivated by the 3D nature of many practical applications, as well as the relationship between polynomial order and desired smoothness, which becomes $d = 8r+1$ on tetrahedra. With this in mind, Alfeld extended the work of Hsieh, Clough, and Tocher to 3D, via splitting a macrotetrahedron into 4 subtetrahedra by connecting the vertices to the centroid~\cite{alfeld1984trivariate}, and then placing a quintic polynomial on each subtetrahedron. Worsey and Farin offered an alternative  split~\cite{worsey1987n} to Alfeld's which allows for a $\mathcal{C}^1$ interpolant using cubic polynomials by increasing the number of subtetrahedra from Alfeld's 4 to 12---which we will explain in great detail in our implementation section. Shortly afterwards, Worsey and Piper extended Powell-Sabin's work on triangles to tetrahedra producing the Worsey-Piper split~\cite{worsey1988trivariate}  which uses quadratic polynomials to approximate trivariate functions over 24 subtetrahedra. This split requires several restrictive geometric conditions in order to maintain $\mathcal{C}^{1}$-continuity. In fact, Worsey and Piper~\cite{worsey1988trivariate}, and Sorokina and Worsey~\cite{sorokina2008multivariate} showed that a sufficient condition for $\mathcal{C}^{1}$-continuity is an \emph{acute} mesh. Here, an acute mesh is one in which each tetrahedra contains its circumsphere center, and each triangular face also contains its circumcircle center. Acute meshes are very difficult to generate in practice, unless the mesh vertices are the points of a uniform rectilinear grid. Alas, it is clearly up to the developer which 3D  spline to choose based on polynomial degree, number of subelements, and complexity of geometric constraints. We believe that the Worsey-Farin split offers a very effective compromise between these factors.  

There are a few researchers who have undertaken significant efforts to document and make the accomplishments of their peers more accessible. One of the earliest examples is de Boor's \emph{A Practical Guide to Splines}~\cite{de1978practical} and later \emph{B(asic)-Spline Basics}~\cite{de1986b}, both of which thoroughly discuss splines in the context of computer-aided design. An updated and comprehensive view on the field of splines is offered by Lai and Schumaker in \emph{Spline Functions on Triangulations}~\cite{lai2007spline}, and Schumaker's \emph{Spline Functions: Basic Theory}~\cite{schumaker2007spline}. Additionally, Schumaker's \emph{Spline Functions: Computational Methods}~\cite{schumaker2015spline} and  \emph{Spline Functions: More Computational Methods}\cite{schumaker2024spline} are  practical resources for those looking to implement splines. 

\subsection{Motivation}

Previously, in~\cite{larose2025spline} we detailed our proposed solution transfer process between 2D meshes using HCT splines. Broadly speaking, this process consists of three steps: (1) synchronization (i.e.~smoothing) of the numerical solution on the original mesh, (2) spline interpolation of the smoothed solution, and (3) projection of the spline solution to the new mesh. We refer the interested reader to~\cite{larose2025spline} for implementation details regarding steps (1) and (3), as these steps are very similar in both the 2D and 3D settings. In fact, for our extension to 3D, this process and its motivations remain the same, however the interpolating HCT splines in step (2) are replaced by WF splines. In this work we will demonstrate the transfer of a discontinuous solution on an unstructured tetrahedral mesh of $k=1$ or $k=2$ elements to a similar-sized, but non-conforming, unstructured tetrahedral mesh of elements of the same order. We will also present a comprehensive description of WF splines, which we will use in our transfer process to produce a smooth and continuous surrogate solution of our discontinuous solution. To the best of our knowledge, this is the only full description of WF splines available currently. As mentioned previously, the original theory of this spline-space is presented by Worsey and Farin (for whom the spline space is named) in~\cite{worsey1987n}; however clear implementation details are lacking in their work. Another description of this space is presented briefly by Lai and Schumaker in Section 18.4 of~\cite{lai2007spline}, and perhaps the most condensed and updated information useful for tetrahedral spline implementation is given in Chapter 6 of~\cite{schumaker2024spline}. Our description is informed by sections 2.1-2.6, 6.2, 15.3-15.7, and 18.4 of~\cite{lai2007spline}, as well as the consultation of T. Sorokina of Towson University who has collaborated with many of the formative developers mentioned throughout this section. 

\subsection{Overview of the Paper}

The format of this work is as follows: we begin by providing a detailed description of our implementation of WF-based solution transfer in Section~\ref{sec;Implementation}. Next, Section~\ref{sec;Theoretical Results} presents theoretical results which characterize the accuracy and robustness of the solution transfer method. Thereafter, Section~\ref{sec;Results} presents the results of numerical experiments which (a) assess the order of accuracy of the method, and (b) assess the impact of adaptive quadrature and global $L_2$-projection on the conservation of the method. We end the section by verifying the  visualization properties of our method. Section~\ref{sec;Conclusion} summarizes the findings of this work.

\section{Implementation Details}
\label{sec;Implementation}

\subsection{Preliminaries on Splines}

For those unfamiliar with splines, we will introduce some standard terminology and concepts that are crucial for understanding the implementation. 
\begin{definition}[Bernstein Basis Polynomials]
    \textit{The cubic Bernstein basis polynomials, $B_{ij\ell m}(\lambda)$, associated with a tetrahedron, $T$, take the form,}
    \begin{align}
        B_{ij\ell m}(\lambda) = \frac{3!}{i!j!\ell! m!} \lambda_1^i \lambda_2^j \lambda_3^{\ell} \lambda_4^m, \qquad i + j + \ell + m = 3, \qquad i,j,\ell,m \in [0,3],
    \end{align}
    \textit{and form a partition of unity, i.e.} 
    \begin{align}
        \sum\limits_{i+j+\ell+m=3}B_{ij\ell m}(\lambda) = 1, \qquad \forall \lambda\in\mathbb{R}^4. 
    \end{align}
    \textit{Note that $\lambda = (\lambda_1,\lambda_2,\lambda_3,\lambda_4)$ is a set of linear polynomials, trivially, or barycentric coordinates over $T$, such that $\lambda_1+\lambda_2 +\lambda_3 + \lambda_4 = 1$. 
    } \label{def:bbpoly}
\end{definition}
\noindent Definition~\ref{def:bbpoly} allows us to introduce the Bernstein-B\'ezier form (also called Bernstein representation or simply \emph{B-form}), which we use to form our WF-spline space over tetrahedron $T$.
\begin{definition}[Bernstein-B\'ezier Form]
    \textit{The WF space on each tetrahedron $\mathcal{WF}(T)$ can be expressed as 12 non-overlapping subtetrahedra, $T = T^{\alpha = 1} \cup \hspace{0.25mm} T^{\alpha = 2} \cup \hspace{0.25mm} T^{\alpha = 3} \cdots \cup \hspace{0.25mm} T^{\alpha = 11} \cup \hspace{0.25mm} T^{\alpha = 12}$ equipped with cubic polynomials ($\polyorder=3$), $\polyfunction^{\alpha}$, with coefficients, $c_{ij\ell m}^{\alpha}$, such that}
    \begin{align}
        \polyfunction^{\alpha} = \sum_{i+j+\ell+m=\polyorder} c_{ij\ell m}^{\alpha} B_{ij\ell m}(\lambda^{\alpha}), 
    \end{align}
    \textit{where each $B_{ij\ell m}(\lambda^{\alpha})$ is a Bernstein polynomial following definition~\ref{def:bbpoly}, and}
    \begin{align*}
         \lambda^{\alpha} = \left(\lambda_{1}^{\alpha}, \lambda_{2}^{\alpha}, \lambda_{3}^{\alpha}, \lambda_{4}^{\alpha}  \right),
    \end{align*}
    \textit{are barycentric coordinates defined on each subtetrahedron} $T^{\alpha}\in T$.
    \label{def:B-form}
\end{definition}
\noindent Definition~\ref{def:B-form} introduces \emph{coefficients}, or B-coefficients (also referred to as B\'ezier ordinates), $c_{ij\ell m}^{\alpha}$. B-coefficients are associated with the Bernstein basis polynomials and are given extensive attention in this work because they are crucial for evaluating the smoothed solution which arises during the solution transfer process. These coefficients are associated with \emph{domain points}, $\xi^{ij\ell m}$, which are the physical locations where the B-coefficients are applied---similar to degrees of freedom. For an arbitrary tetrahedron, the domain points of a cubic polynomial can be found using
\begin{equation}
    \xi^{{ij\ell m}} = \frac{i\vertex^1 + j\vertex^2+\ell \vertex^3+m \vertex^4}{3}, \qquad i+j+\ell+m = 3,
    \label{eqn:domain_points}
\end{equation}
where $\vertex^1, \ldots, \vertex^4$ are the tetrahedron vertices.
Figure~\ref{fig:domain_points} illustrates the domain points for a cubic polynomial on a tetrahedron. 
\begin{figure}[h!]
    \centering
    \includegraphics[width=0.8\linewidth]{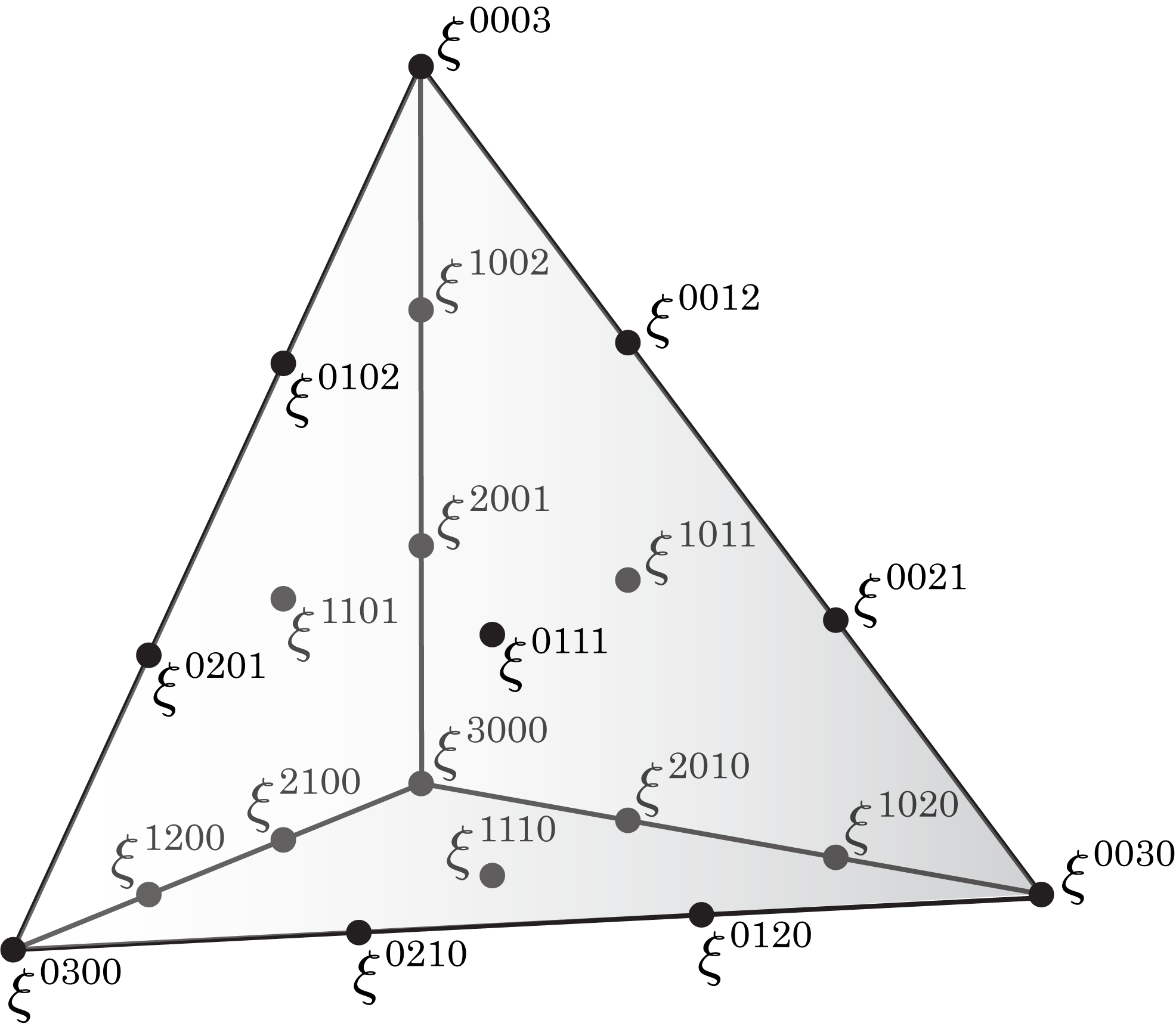}
    \caption{The domain points for a cubic polynomial on a tetrahedron generated using equation~\ref{eqn:domain_points}.}
    \label{fig:domain_points}
\end{figure}

\subsection{Worsey-Farin Interpolation and Solution Transfer}

We begin by introducing some notation. Consider an initial mesh over the domain $\Omega \in \mathbb{R}^3$, which we will refer to as the \textit{source mesh} henceforth. We denote the source mesh by $\mathcal{T}_a$. Now, consider a second mesh over the same domain, $\Omega$, which was produced by some adaptation process such that it is not conforming nor hierarchical with respect to the source mesh. We will refer to this second mesh as the \textit{target mesh} henceforth, and we denote it by $\mathcal{T}_b$. Our objective is to transfer the solution $u_{h_a}$ from $\mathcal{T}_a$ to $\mathcal{T}_b$.

As we have discussed previously, the process for transferring the solution can be described in three steps:

\begin{itemize}
    \item Averaging of the solution and its gradients on the source mesh. The averaged solution is denoted by $E(u_{h_a})$, where $E(\cdot)$ is the averaging or \emph{synchronization} operator. This process is carried out in a pointwise fashion.
    \item Spline interpolation of the averaged solution and its gradients on the source mesh. This process constructs a continuous version of $E(u_{h_a})$, using the pointwise values computed in the previous step. 
    \item $L_2$-projection of the spline interpolated solution from the source mesh to the target mesh. The $L_2$-projection operator is denoted by $I_{h_b}(\cdot)$.
\end{itemize}
These steps were originally discussed  in~\cite{larose2025spline}, and we provide a visual summary of the steps in Figure~\ref{fig:transfer}.
\begin{figure}
    \centering
    \includegraphics[width=1\linewidth]{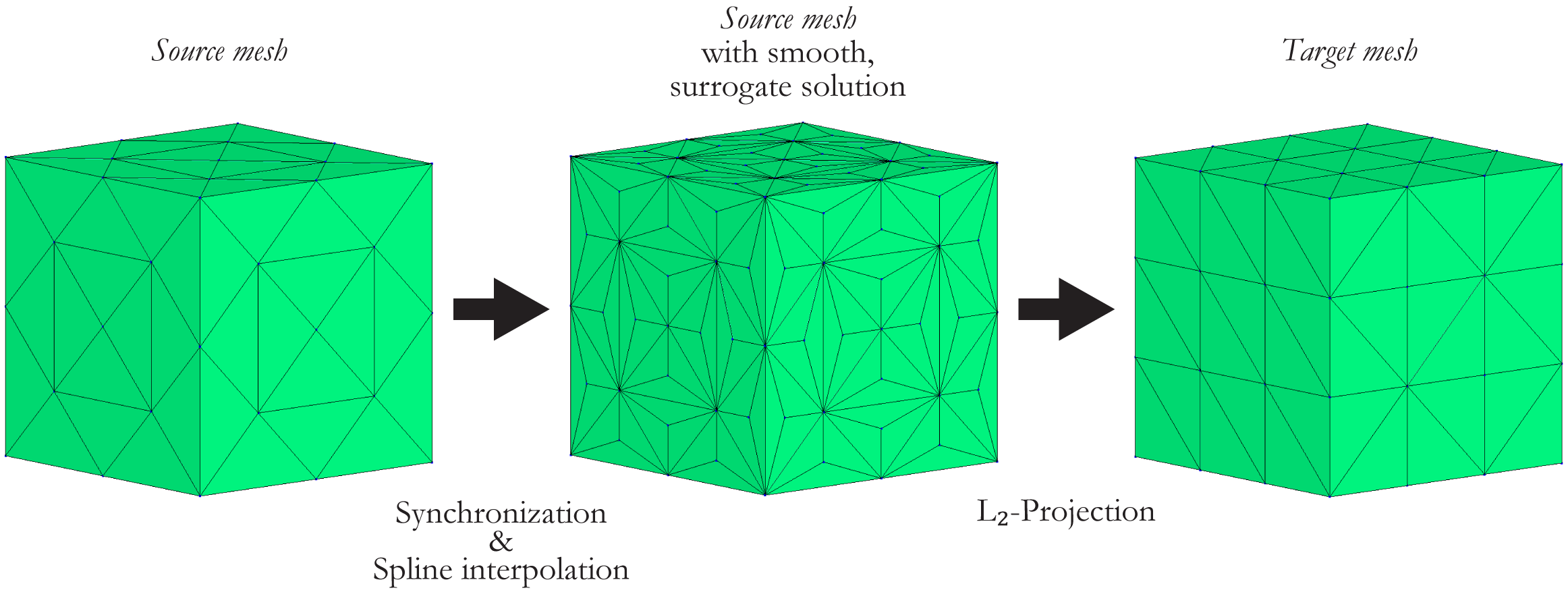}
    \caption{A visual representation of our proposed 3-step transfer process.}
    \label{fig:transfer}
\end{figure}

In accordance with the purpose of this paper, we will now discuss in more detail the spline interpolation process in 3D. During the transfer process, the spline must be constructed on each element of the source mesh $\mathcal{T}_{a}$. Thereafter, the spline must be evaluated (or sampled) in order to perform the required $L_2$-projection on to $\mathcal{T}_{b}$. It is important to note that, in practice, the $L_2$-projection is performed  using quadrature rules defined on the elements of $\mathcal{T}_b$. With these ideas in mind, the primary technical objective becomes very simple: compute the values of the splines on elements in $\mathcal{T}_{a}$ at a set of quadrature points which are taken from elements in $\mathcal{T}_{b}$. Once the values of the splines are computed at the quadrature points, then the $L_2$-projection can be immediately performed. In what follows, we explain how to evaluate the splines at the quadrature points.

Let us assume that each element of $\mathcal{T}_b$ contains a certain number of quadrature points. In order to compute the appropriate spline values at these points, we must identify the elements in $\mathcal{T}_{a}$ that contain the quadrature points. The locations of these points are found using a bounding volume hierarchy (BVH) method combined with a matrix determinant method for finding points that lie on or near element boundaries. The details of this are included in~\cite{larose2025spline}, and extend naturally to 3D.

Once a quadrature point is found within a source element of $\mathcal{T}_a$, that source element---which we call the macro-tetrahedron with respect to its subtetrahedra---must be split into 12-subtetrahedra. The split is determined by:
\begin{enumerate}
    \item Identifying the tetrahedra which share a face with the macro-tetrahedron.
    \item Calculating the incenter of the macro-tetrahedron as well as  its face-neighbors in accordance with~\cite{klein2020insphere}.
    \item Determining the point of intersection on each face of the macro-tetrahedron using the lines formed between the incenter of the tetrahedra and the incenters of its face-neighbors. This intersection becomes the site for a \emph{Clough-Tocher} split on each face of the macro-tetrahedron.
    \item[NOTE:]\textit{Steps 1-3 of the procedure are illustrated for a single face neighbor in Figure~\ref{fig:wf_split}.}
    \item 4-subtetrahedra are formed by connecting the vertices of the macro-tetrahedron to its incenter. This is an \emph{Alfeld}-type split.
    \item Each of the 4-subtetrahedra are subdivided into 3 parts by connecting their vertices to their respective facial split points (from step 3). 
    \item[NOTE:] \textit{Steps 4 and 5 of the procedure are illustrated in Figure~\ref{fig:macro_tet}.} 
\end{enumerate}

\begin{figure}[h!]
    \centering
    \includegraphics[width=1.0\linewidth]{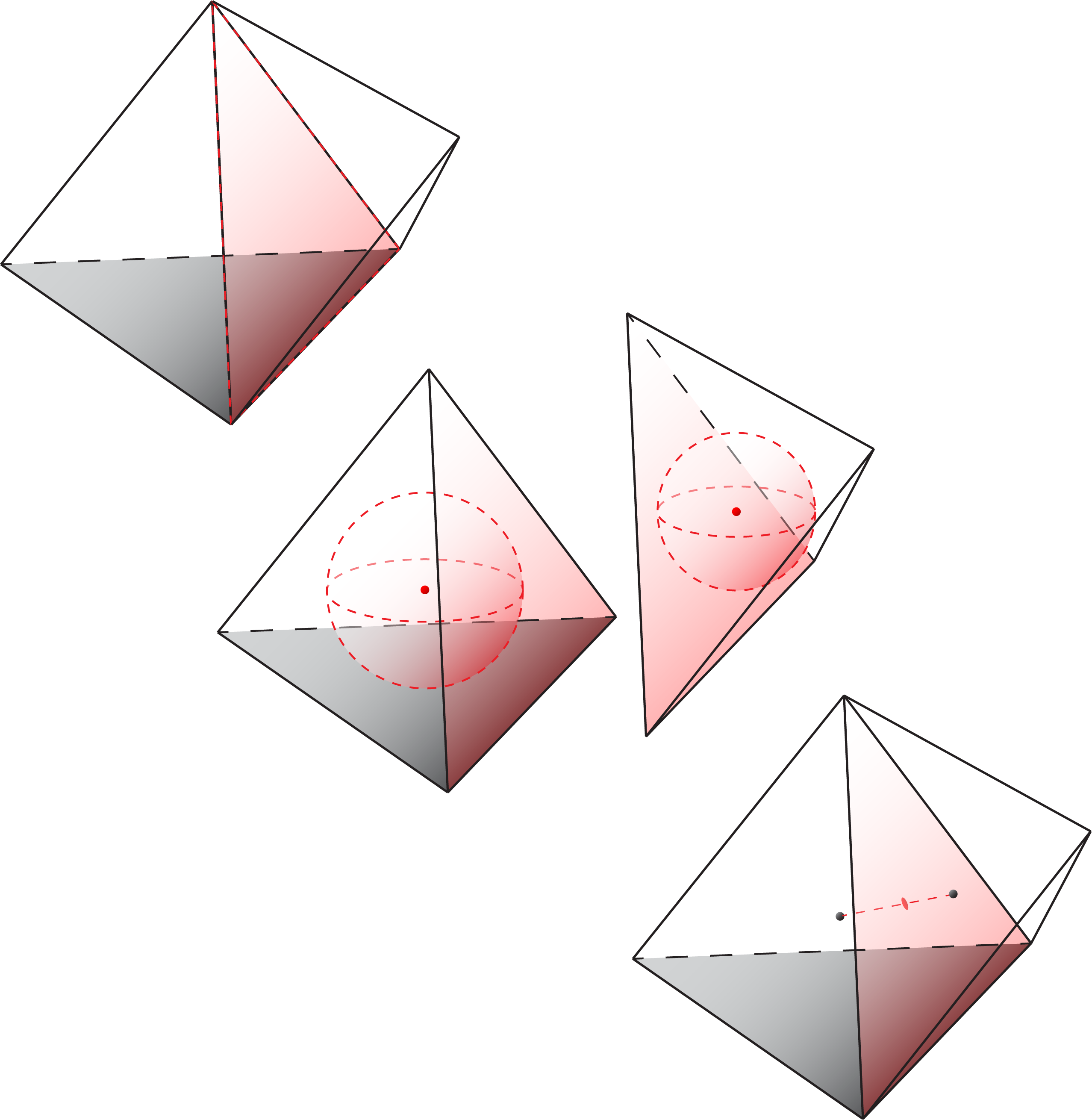}
    \caption{The Worsey-Farin split process shown for a single face neighbor. Top left shows the identification of the face neighbor; center shows an exploded view of the calculation of the incenter of each tetrahedron; bottom right shows the face split point (red ellipse) produced by intersection of the line between incenters and the shared face.}
    \label{fig:wf_split}
\end{figure}

\begin{figure}[h!]
    \centering
    \includegraphics[width=1.0\linewidth]{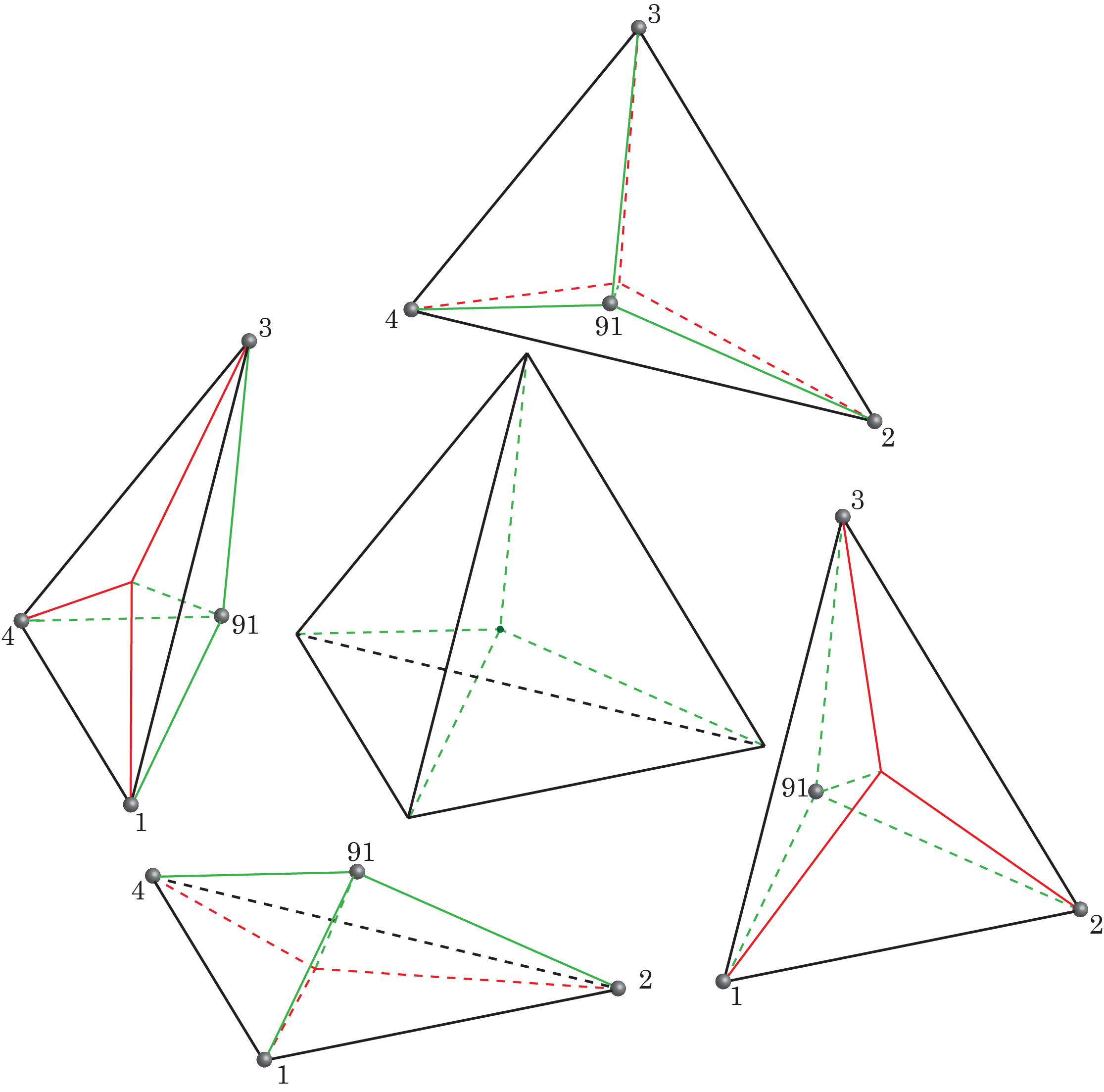}
    \caption{The macro-tetrahedron shown in the center with internal edges illustrated by dotted lines. Subtetrahedra are produced by connecting the vertices of the macro-tetrahedron to the incenter (shown by green lines). Subsequent subtetrahedra are produced by connecting the vertices of the subtetrahedra to the facial split points (shown by red lines). The incenter and facial split points were found from the process shown in Figure~\ref{fig:wf_split}. The numbering of nodes in this figure is simply the degree-of-freedom numbering for the WF spline. This numbering is explained in the subsequent text.}
    \label{fig:macro_tet}
\end{figure}
With the subtetrahedra defined, we can construct the interpolating splines by endowing each subtetrahedra with cubic domain points and B-coefficients as illustrated in Figure~\ref{fig:domain_points}. The advantage of using Bernstein-Bezier form is that the algebraic smoothness can be understood geometrically---especially in lower dimensions. A short proof of the continuity condition for smooth joins of polynomials on two tetrahedra is given in Section 15.14 of~\cite{lai2007spline}. This condition is crucial for the construction of the $\mathcal{C}^1$ WF-spline space over a macro-element, such that polynomials defined over each subelement join continuously along faces and edges. Part of the impact of meeting this condition means that coefficients are shared along interior faces and edges between subtetrahedra.\footnote{This grants $\mathcal{C}^0$ continuity. Additional restrictions on coefficients are discussed in Section 15.14 of~\cite{lai2007spline} and are necessary for $\mathcal{C}^1$ continuity.} Because of the shared coefficients, this necessitates using a mapping to select the appropriate coefficients from the 91 total coefficients on the macroelement to associate with the local domain points shown in Figure~\ref{fig:domain_points}, which are defined relative to the subtetrahedra. Table~\ref{tab:Mappings} contains the mappings for all subtetrahedra (see section~\ref{sec:mass_conservation_tests} of the Numerical Results).

With the subdivisions produced for the source element, the location of each quadrature point within one of the 12-subtetrahedra must be determined in order to select the appropriate mapping of coefficients for evaluation. This determination is made using a matrix determinant method and iterating first over the 4-subtetrahedra based on the Alfeld-split, and then their subsequent subtetrahedra. The final step is to evaluate the splines at the quadrature point using the coefficients defined over the subtetrahedra via the de Casteljau algorithm. The de Casteljau algorithm is defined in Section 15.6 of~\cite{lai2007spline}, but also has been rigorously described by the spline and computer science community since de Casteljau's development of this algorithm in 1959~\cite{de1959outillage}\cite{de1963courbes}\cite{boehm1999casteljau}. We restate it here for completeness and acknowledge that this definition borrows heavily from notes of Peter Alfeld~\cite{AlfeldWebPage}, but is extended to the trivariate case for our purposes.
\begin{definition}[The de Casteljau Algorithm]
\textit{The polynomial, $\polyfunction$, defined on the tetrahedron, $T$, can be evaluated at any point, $P$ contained in $T$, as follows:}
    \begin{align}
        \polyfunction(P) = c_{0000}^{\gamma} 
    \end{align}
    \textit{where},
    \begin{align}
        c_{ij\ell m}^{0} = c_{ij\ell m}, \qquad i + j + \ell + m = \gamma, 
    \end{align}
    \textit{and}
    \begin{align}
        c_{ij\ell m}^{r+1} = \lambda_1 c_{i+1,j,\ell, m}^{r}+\lambda_2 c_{i,j+1,\ell, m}^{r}+\lambda_3 c_{i,j,\ell+1, m}^{r}+\lambda_4 c_{i,j,\ell, m+1}^{r},& \\ \qquad i + j + \ell + m + r + 1 = \gamma, \quad r = 0, \cdots, \gamma - 1. \nonumber
    \end{align}
    \textit{In this context, the superscripts of the coefficients, $c$, indicate the step of the algorithm.} 
\end{definition}
For the sake of brevity, all 91 coefficients and diagrams for all 12 subtetrahedra are given in the Appendix. 
\section{Theoretical Results}
\label{sec;Theoretical Results}

In this section, we discuss the important mathematical properties of the WF-based  solution transfer process. First, we discuss our motivation, and then introduce some relevant notation and assumptions. Thereafter, we discuss the properties of the WF-based smoothing operator $E(\cdot)$, and the $L_2$-interpolation operator $I_{h}(\cdot)$. Finally, we combine the previous properties and assumptions in order to construct an error estimate and a discrete maximum principle for our solution transfer process. 

\subsection{Motivation}

%The results of this section are not trivial. 
Although the accuracy of cubic spline interpolation is well-understood, the accuracy of the entire solution-transfer process has \emph{not} been characterized. In other words, the combination of smoothing, interpolation, and projection steps need not preserve the accuracy of the solution. Hence, the order of accuracy will be rigorously established in what follows.

Throughout the remainder of this section, we will closely follow the error estimate in Section 3 of~\cite{larose2025spline}. The proof in the present section differs from the original proof, as the original  proof was limited to solution transfer with 2D, HCT splines. In contrast, the present proof requires the precise handling of the WF spline degrees of freedom in 3D. Furthermore, it constructs mathematical bounds for the jumps in the solution across triangular faces, as opposed to jumps across 2D edges.

\subsection{Theoretical Preliminaries}

In this section, we will assume that the numerical solution is produced by a finite element method. This enables us to leverage the rich, mathematical formalism associated with such methods. With this in mind, we begin in classical fashion by introducing the piecewise-polynomial space
\begin{align*}
    V^{\polyorder}_{\mathcal{T}} &= \left\{u \in L_{2}(\Omega) : u_T = u|_T \in \mathcal{P}_{\polyorder} (T) \quad \forall T \in \mathcal{T} \right\}.
\end{align*}
Here, $\mathcal{P}_{\polyorder}(T)$ is the space of polynomials of degree $\leq \polyorder$ on the tetrahedral element $T$. In a natural fashion, this element belongs to the triangulation $\mathcal{T}$. 

Furthermore, we can construct a $\mathcal{C}^{1}(\Omega)$, piecewise-polynomial spline space
\begin{align*}
    W_{\mathcal{T}} &= \left\{w \in H^{2}(\Omega) : w_T = w|_T \in \mathcal{WF} (T) \quad \forall T \in \mathcal{T} \right\}.
\end{align*}
Here, $\mathcal{WF}$ is the space of piecewise-cubic Worsey-Farin splines. 

\begin{assumption}[Truncation Error] Consider a partial differential equation with an exact solution $u \in H^{q}(\Omega)$, where $q$ is a positive integer. The following error estimate holds for a finite element approximation of this solution, $u_h \in V^{\polyorder}_{\mathcal{T}}$, when $\polyorder\geq 1$, and $q \geq \polyorder+1$ 
\begin{align}
    \left\| u- u_h \right\|_{L^{2}(\Omega)} \lesssim C_{I} h^{\polyorder+1}.
\end{align}
Here, $C_{I}$ is a constant that depends on the exact solution $u$ and its gradient.
\label{truncation_3d}
\end{assumption}

\begin{assumption}[Jump Identities]
    Consider a partial differential equation with an exact solution $u \in H^{q}(\Omega)$, where $q$ is a positive integer. The following jump identities hold for a finite element approximation of this solution, $u_h \in V^{\polyorder}_{\mathcal{T}}$, when $\polyorder \geq 1$, and $q \geq \polyorder+1$
    \begin{align}
        \left\| \llbracket u_{h} \rrbracket_F \right\|_{L_2(F)} & \lesssim C_{II} h^{\polyorder+1/2} , \\[1.5ex]
       \left\| \llbracket \frac{\partial u_h}{\partial e_{\perp ,1}} \rrbracket_{F} \right\|_{L_{2}(F)} & \lesssim C_{III} h^{\polyorder-1/2}, \\[1.5ex]
       \left\| \llbracket \frac{\partial u_h}{\partial e_{\perp ,2}} \rrbracket_{F} \right\|_{L_{2}(F)} & \lesssim C_{IV} h^{\polyorder-1/2}.
    \end{align}
    Here, we define $\llbracket \cdot \rrbracket$ such that
    \begin{align*}
        \llbracket u_h \rrbracket_{F} &= u_{h}^{+} - u_{h}^{-}, \qquad F \cap \partial \Omega = \emptyset, \\[1.0ex] \llbracket u_h \rrbracket_{F} &= u_h, \; \; \qquad \qquad F \cap \partial \Omega \neq \emptyset 
    \end{align*}
    is the jump in $u_h$, where $u_{h}^{+}$ and $u_{h}^{-}$ are the values of the solution on either side of interface $F$. Note that $F$ is a triangular face which belongs to the set of all faces of the triangulation $\mathcal{F}(\mathcal{T})$, equivalently $F \in \mathcal{F}(\mathcal{T})$. In addition,
    $C_{II}$, $C_{III}$, and $C_{IV}$ are constants that depend on the exact solution $u$ and its first derivatives. Furthermore, we note that $e_{\perp ,1}$ and $e_{\perp ,2}$ are two vectors that are orthogonal to each other and to an edge $e \in \partial F$.
\label{assumption_jump_3d}
\end{assumption}

Next, we define a smoothing operator (i.e.~a synchronization operator) which maps piecewise-polynomial functions into the WF space.

\begin{definition}[$H^2$ Smoothing Operator]
    Let us consider $E(\cdot): V^{\polyorder}_{\mathcal{T}} \rightarrow W_{\mathcal{T}}$. This is a smoothing and synchronization operator that has the following properties
    \begin{align}
        (E(u))(\vertex) &= \frac{1}{|\mathcal{T}^\vertex|}\sum_{T \in \mathcal{T}^\vertex} u_T(\vertex) \qquad \; \; \; \quad \forall \vertex \in \mathcal{V} (\mathcal{T}), \label{3d_point_k1} \\[1.5ex]
        \nabla (E(u))(\vertex) &= \frac{1}{|\mathcal{T}^\vertex|}\sum_{T \in \mathcal{T}^\vertex} \nabla u_T(\vertex) \quad \qquad \forall \vertex \in \mathcal{V} (\mathcal{T}),\\[1.5ex]
        \frac{\partial (E(u))}{\partial e_{\perp ,1}} (m) &= \frac{1}{|\mathcal{T}^m|}\sum_{T \in \mathcal{T}^m} \frac{\partial u_T}{\partial e_{\perp ,1}} (m) \qquad \forall m \in \mathcal{M} (\mathcal{T}),\\[1.5ex]
        \frac{\partial (E(u))}{\partial e_{\perp ,2}} (m) &= \frac{1}{|\mathcal{T}^m|}\sum_{T \in \mathcal{T}^m} \frac{\partial u_T}{\partial e_{\perp ,2}} (m) \qquad \forall m \in \mathcal{M} (\mathcal{T}).
    \end{align}
    Here, $\mathcal{T}^{\vertex} = \{T \in \mathcal{T} : \vertex \in \partial T \}$ is the set of tetrahedra that share a vertex $\vertex$, and $\mathcal{T}^m = \{T \in \mathcal{T} : m \in \partial T \}$ is the set of tetrahedra that share an edge midpoint~$m$.
    \label{definition_h2_3d}
\end{definition}

\begin{lemma}
    Consider a tetrahedron $T \in \mathcal{T}$, a vertex $\vertex$ of $T$, and an edge midpoint $m$ of $T$. Under these circumstances, the following expressions hold
    \begin{align}
        &|u_{T}(\vertex) - (E(u))(\vertex)|^{2} \lesssim \sum_{F \in \mathcal{F}^\vertex} |F|^{-1} \| \llbracket u \rrbracket_{F} \|^{2}_{L_{2}(F)} \quad \forall u \in V_{\mathcal{T}}^{\polyorder}, \label{3d_point_k1_bound}
    \end{align}
    \begin{align}
        \nonumber&|\nabla(u_{T})(\vertex) - \nabla(E(u))(\vertex) |^{2}\\[1.5ex] 
        &\lesssim \sum_{F \in \mathcal{F}^\vertex} \left( |F|^{-2} \| \llbracket u \rrbracket_F \|^{2}_{L_{2}(F)} + |F|^{-1} \left\| \llbracket \frac{\partial u}{\partial e_{\perp ,1}} \rrbracket_{F} \right\|^{2}_{L_{2}(F)} + |F|^{-1} \left\| \llbracket \frac{\partial u}{\partial e_{\perp ,2}} \rrbracket_{F} \right\|^{2}_{L_{2}(F)} \right) \quad \forall u \in V_{\mathcal{T}}^{\polyorder}, \label{3d_grad_k1_bound}
    \end{align}
    \begin{align}
        &\left| \frac{\partial u_{T}}{\partial e_{\perp ,1}}(m) - \frac{\partial (E(u))}{\partial e_{\perp ,1}}(m)  \right|^{2} \lesssim \sum_{F \in \mathcal{F}^m} |F|^{-1} \left\| \llbracket \frac{\partial u}{\partial e_{\perp ,1}} \rrbracket_{F} \right\|^{2}_{L_{2}(F)} \quad \forall u \in V_{\mathcal{T}}^{\polyorder}, \label{3d_norm1_k1_bound}
    \end{align}
    \begin{align}
        &\left| \frac{\partial u_{T}}{\partial e_{\perp ,2}}(m) - \frac{\partial (E(u))}{\partial e_{\perp ,2}}(m)  \right|^{2} \lesssim \sum_{F \in \mathcal{F}^m} |F|^{-1} \left\| \llbracket \frac{\partial u}{\partial e_{\perp ,2}} \rrbracket_{F} \right\|^{2}_{L_{2}(F)} \quad \forall u \in V_{\mathcal{T}}^{\polyorder}, \label{3d_norm2_k1_bound}
    \end{align}
    where $\mathcal{F}^\vertex$ is the set of interior faces sharing vertex $\vertex$, and $\mathcal{F}^m$ is the set of interior faces sharing midpoint $m$.
    \label{3d_point_wise_bounds}
\end{lemma}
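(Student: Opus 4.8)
The plan is to establish all four inequalities by a single mechanism with two ingredients. First, I would exploit the averaging structure of $E(\cdot)$ from Definition~\ref{definition_h2_3d}: a quantity defined as a patch average has the property that the error at $\vertex$ (or at $m$) between $u_T$ and $E(u)$ is itself the patch average of the \emph{pairwise} differences $u_T(\vertex)-u_{T'}(\vertex)$. Second, I would connect any two tetrahedra of a patch by a short chain of face-adjacent tetrahedra that stays inside the patch, telescope each pairwise difference into a sum of jumps across the crossed (interior) faces evaluated at $\vertex$, and then convert each such pointwise value of a polynomial on a triangular face into an $L_2(F)$-norm via a scaled, finite-dimensional norm equivalence. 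Throughout I assume the standard shape-regularity of $\mathcal{T}$, which makes every vertex patch $\mathcal{T}^{\vertex}$ and edge patch $\mathcal{T}^{m}$ contain a uniformly bounded number of elements, keeps the measures of faces within a patch comparable, and makes the constants in the scaled inequalities uniform.

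For \eqref{3d_point_k1_bound} this is carried out directly: by \eqref{3d_point_k1}, $u_T(\vertex)-E(u)(\vertex)=|\mathcal{T}^{\vertex}|^{-1}\sum_{T'\in\mathcal{T}^{\vertex}}(u_T(\vertex)-u_{T'}(\vertex))$; telescope each term along a chain $T=T_0,\dots,T_\ell=T'$ with $T_{j-1}\cap T_j=F_j\ni\vertex$ into $\sum_{j}\pm\llbracket u\rrbracket_{F_j}(\vertex)$, and use $|\llbracket u\rrbracket_{F_j}(\vertex)|^{2}\lesssim|F_j|^{-1}\|\llbracket u\rrbracket_{F_j}\|_{L_2(F_j)}^{2}$, valid since $\llbracket u\rrbracket_{F_j}$ is a polynomial of degree $\le\polyorder$ on the triangle $F_j$. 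Squaring, a Cauchy--Schwarz over the boundedly many chain links and over $T'$, and comparability of $|F_j|$ to $|F|$ within the patch give the claim. The edge-midpoint estimates \eqref{3d_norm1_k1_bound} and \eqref{3d_norm2_k1_bound} are analogous: the last two identities of Definition~\ref{definition_h2_3d} again make $\partial u_T/\partial e_{\perp,i}(m)-\partial E(u)/\partial e_{\perp,i}(m)$ a patch average of pairwise differences; one chains together tetrahedra of $\mathcal{T}^{m}$ that share the \emph{edge} $e$ (so every crossed face contains $e$ and a single choice of the orthonormal pair $e_{\perp,1},e_{\perp,2}$ serves the whole chain), telescopes into $\sum_j\pm\llbracket\partial u/\partial e_{\perp,i}\rrbracket_{F_j}(m)$, and bounds the pointwise value of this degree-$(\polyorder-1)$ polynomial on $F_j$ by $|F_j|^{-1/2}$ times its $L_2(F_j)$-norm.

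The gradient estimate \eqref{3d_grad_k1_bound} needs one extra idea. Each telescoped term is now $\nabla u_{T_{j-1}}(\vertex)-\nabla u_{T_j}(\vertex)$, and I would split it into the component tangential to $F_j$ and the transverse component. The tangential component equals $\nabla_{F_j}(\llbracket u\rrbracket_{F_j})(\vertex)$, and a scaled norm equivalence that accounts for one differentiation gives $|\nabla_{F_j}(\llbracket u\rrbracket_{F_j})(\vertex)|^{2}\lesssim|F_j|^{-2}\|\llbracket u\rrbracket_{F_j}\|_{L_2(F_j)}^{2}$, the first family of terms in \eqref{3d_grad_k1_bound}. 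For the transverse component I would write the normal direction of $F_j$ at $\vertex$ in the basis formed by the direction of an edge $e$ of $F_j$ through $\vertex$ together with $e_{\perp,1},e_{\perp,2}$; the transverse part of the gradient jump is then a mesh-uniform linear combination of the along-edge derivative of $\llbracket u\rrbracket_{F_j}$ at $\vertex$ (bounded as above, contributing again $|F_j|^{-2}\|\llbracket u\rrbracket_{F_j}\|_{L_2(F_j)}^{2}$) and of $\llbracket\partial u/\partial e_{\perp,i}\rrbracket_{F_j}(\vertex)$, $i=1,2$, each a degree-$(\polyorder-1)$ polynomial on $F_j$ bounded pointwise by $|F_j|^{-1/2}\|\cdot\|_{L_2(F_j)}$ and hence contributing the $|F_j|^{-1}\|\llbracket\partial u/\partial e_{\perp,i}\rrbracket_{F_j}\|_{L_2(F_j)}^{2}$ terms. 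Summing over chain links, over $T'$, and then collecting over the faces of the patch yields \eqref{3d_grad_k1_bound}.

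\textbf{Main obstacle.} The telescoping and the scaled polynomial inequalities are routine; the delicate step is the frame bookkeeping in the gradient estimate. The transverse derivative data is naturally stored at edge midpoints and in directions $e_{\perp,i}$ tied to a choice of edge, whereas \eqref{3d_grad_k1_bound} is a statement at the vertex $\vertex$, so one must verify carefully that, for each crossed face $F_j$, the transverse part of $\nabla u_{T_{j-1}}(\vertex)-\nabla u_{T_j}(\vertex)$ really is a linear combination of $\llbracket\partial u/\partial e_{\perp,i}\rrbracket_{F_j}$ and the along-edge tangential derivative of $\llbracket u\rrbracket_{F_j}$ at $\vertex$, with coefficients bounded independently of the mesh (this uses shape-regularity to keep the change-of-frame matrices well conditioned and also reconciles the possibly different choice of edge implicit in Assumption~\ref{assumption_jump_3d}), and that the tangential-gradient contributions are not double counted. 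Once that decomposition is pinned down, the remainder follows the pattern of the value estimate.
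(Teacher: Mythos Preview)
Your proposal is correct and follows essentially the same route as the paper: averaging structure of $E(\cdot)$, telescoping along chains of face-adjacent tetrahedra in the patch, and conversion of pointwise polynomial values on faces into $L_2(F)$-norms via inverse/scaled norm estimates. The only cosmetic difference is in the gradient estimate, where the paper decomposes $\llbracket\nabla u\rrbracket_F$ directly into the three directions $e_{\perp,1},e_{\perp,2},t$ (with $t$ along an edge of $F$) and then bounds the tangential piece $\|\llbracket\partial u/\partial t\rrbracket_F\|_{L_2(F)}^{2}\le|F|^{-1}\|\llbracket u\rrbracket_F\|_{L_2(F)}^{2}$ by an inverse estimate, whereas you first split tangential/transverse to $F$ and then change frame; the ``frame bookkeeping'' you flag as the main obstacle is handled in the paper by simply adopting the $(e_{\perp,1},e_{\perp,2},t)$ basis from the outset.
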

\begin{proof}
    Consider two adjacent tetrahedra $T$ and $T' \in \mathcal{T}^{\vertex}$ that are face neighbors, and that share a common vertex $\vertex$.  The jump in the solution between the two tetrahedra can be expressed in terms of the jumps between (face) neighboring tetrahedra,  $\mathcal{T}^{\vertex} = \left\{ T_1, T_2, \ldots, T_j, T_{j+1}, \ldots, T_n \right\}$, $T_1 = T$, $T_n = T'$, that share the same vertex $\vertex$, as follows
    \begin{align}
        \nonumber |u_{T}(\vertex) - u_{T'}(\vertex)|^{2} &= \left| \sum^{n-1}_{j=1} \left[ u_{T_j}(\vertex) - u_{T_{j+1}}(\vertex) \right] \right|^{2}\\[1.5ex]
        \nonumber &\lesssim \sum^{n-1}_{j=1}|u_{T_j}(\vertex) - u_{T_{j+1}}(\vertex)|^2 = \sum^{n-1}_{j=1} \left| \llbracket u \rrbracket_{F_j} (\vertex) \right|^{2}\\[1.5ex]
        &\lesssim \sum^{n-1}_{j=1} |F|^{-1} \left\| \llbracket u \rrbracket_{F_j} \right\|^{2}_{L_{2}(F_j)}. \label{3d_point_jump}
    \end{align}
    Here, we have used the root-mean-square-arithmetic-mean  inequality in conjunction with the standard inverse estimate of~\cite{brenner2008mathematical}, Chapter 4.
    Upon combining Eqs.~\eqref{3d_point_k1} and~\eqref{3d_point_jump}, one obtains the first result (Eq.~\eqref{3d_point_k1_bound})
    \begin{align*}
        |u_{T}(\vertex) - (E(u))(\vertex)|^{2} = \left| \frac{1}{|\mathcal{T}^\vertex|} \sum_{T' \in \mathcal{T}^\vertex} \left( u_{T}(\vertex) - u_{T'}(\vertex) \right) \right|^{2} \lesssim \sum_{F \in \mathcal{F}^\vertex} |F|^{-1} \left\| \llbracket u \rrbracket_{F} \right\|^{2}_{L_{2}(F)}.
    \end{align*}
    Next, in a similar fashion, the following holds
    \begin{align}
        \nonumber&| \nabla (u_{T})(\vertex) - \nabla (E(u))(\vertex)  |^{2} \lesssim \sum_{F \in \mathcal{F}^\vertex} |F|^{-1} \| \llbracket \nabla u \rrbracket_F \|^{2}_{L_{2}(F)} \\[1.5ex]
        &= \sum_{F \in \mathcal{F}^{\vertex}} |F|^{-1} \left( \left\| \llbracket \frac{\partial u}{\partial e_{\perp ,1}} \rrbracket_{F} \right\|^{2}_{L_{2}(F)} + \left\| \llbracket \frac{\partial u}{\partial e_{\perp ,2}} \rrbracket_{F} \right\|^{2}_{L_{2}(F)} + \left\| \llbracket \frac{\partial u}{\partial t} \rrbracket_{F} \right\|^{2}_{L_{2}(F)} \right), \label{gradient_exp_3d}
    \end{align}
    where $\partial u / \partial t$ is the derivative along an edge in the tangential direction. In addition, the following inverse estimate holds
    \begin{align}
        \left\| \llbracket \frac{\partial u}{\partial t} \rrbracket_{F} \right\|^{2}_{L_{2}(F)} \leq |F|^{-1} \| \llbracket u \rrbracket_F \|^{2}_{L_{2}(F)}. \label{gradient_est_3d}
    \end{align}
    Upon substituting Eq.~\eqref{gradient_est_3d} into Eq.~\eqref{gradient_exp_3d}, we obtain the second desired result (Eq.~\eqref{3d_grad_k1_bound}).
    
    The final desired results (Eqs.~\eqref{3d_norm1_k1_bound} and~\eqref{3d_norm2_k1_bound}) immediately follow from standard inverse estimates of~\cite{brenner2008mathematical}, Chapter 4.
\end{proof}

\begin{lemma}[Bound on $H^2$ Smoothing Operator]
    Suppose $u \in V^{\polyorder}_{\mathcal{T}}$, where $1 \leq \polyorder \leq 3$. In addition, consider the smoothing operator $E(\cdot)$ which acts on $u$ and is defined in accordance with Definition~\ref{definition_h2_3d}. Under these circumstances, the following results hold
    \begin{align}
        &\nonumber \| u - E(u) \|^{2}_{L_2(T)}\\[1.5ex] 
        &\nonumber\lesssim \sum_{\vertex \in \mathcal{V}(T)} \sum_{F \in \mathcal{F}^\vertex} h \| \llbracket u \rrbracket_{F} \|^{2}_{L_{2}(F)} + \sum_{\vertex \in \mathcal{V}(T)} \sum_{F \in \mathcal{F}^\vertex} h^{3} \left( \left\| \llbracket \frac{\partial u}{\partial e_{\perp ,1}} \rrbracket_{F} \right\|^{2}_{L_{2}(F)} + \left\| \llbracket \frac{\partial u}{\partial e_{\perp ,2}} \rrbracket_{F} \right\|^{2}_{L_{2}(F)} \right) \\[1.5ex]
        &+ \sum_{m \in \mathcal{M}(T)} \sum_{F \in \mathcal{F}^m} h^{3} \left( \left\| \llbracket \frac{\partial u}{\partial e_{\perp ,1}} \rrbracket_{F} \right\|^{2}_{L_{2}(F)} + \left\| \llbracket \frac{\partial u}{\partial e_{\perp ,2}} \rrbracket_{F} \right\|^{2}_{L_{2}(F)} \right),
    \end{align}
    and similarly,
    \begin{align}
        &\nonumber \| u - E(u) \|^{2}_{L_2(\Omega)}\\[1.5ex] 
        &\lesssim \sum_{F \in \mathcal{F}(\mathcal{T})} h \| \llbracket u \rrbracket_{F} \|^{2}_{L_{2}(F)} + \sum_{F \in \mathcal{F}(\mathcal{T})} h^{3} \left( \left\| \llbracket \frac{\partial u}{\partial e_{\perp ,1}} \rrbracket_{F} \right\|^{2}_{L_{2}(F)} + \left\| \llbracket \frac{\partial u}{\partial e_{\perp ,2}} \rrbracket_{F} \right\|^{2}_{L_{2}(F)} \right),
    \end{align}
    where
    \begin{align*}
    h = \max_{T \in \mathcal{T}} h_{T}.
    \end{align*}
    \label{lemma_h2_3d}
\end{lemma}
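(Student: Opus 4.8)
The plan is to exploit the fact that, because $1 \le \polyorder \le 3$, the local error $w := u_{T} - E(u)|_{T}$ is itself a Worsey--Farin spline on $T$, to control $\|w\|_{L_{2}(T)}$ by the nodal data of $w$ through a scaling argument, and then to insert the pointwise estimates of Lemma~\ref{3d_point_wise_bounds}, which bound exactly that nodal data. Indeed, since $\mathcal{P}_{\polyorder}(T) \subseteq \mathcal{P}_{3}(T) \subseteq \mathcal{WF}(T)$, the polynomial $u_{T}$ is trivially a Worsey--Farin spline on $T$, so $w \in \mathcal{WF}(T)$; and by Definition~\ref{definition_h2_3d} the nodal functionals of $E(u)$ at the vertices and edge midpoints of $T$ return the corresponding \emph{averaged} quantities, so the nodal data of $w$ are precisely the differences $u_{T}(\vertex) - (E(u))(\vertex)$, $\nabla u_{T}(\vertex) - \nabla (E(u))(\vertex)$ for $\vertex \in \mathcal{V}(T)$, and $\partial u_{T}/\partial e_{\perp ,i}(m) - \partial (E(u))/\partial e_{\perp ,i}(m)$ for $m \in \mathcal{M}(T)$, $i=1,2$ --- i.e.\ exactly the left-hand sides appearing in Lemma~\ref{3d_point_wise_bounds}.

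The next step is the scaled stability estimate: for every $v \in \mathcal{WF}(T)$,
\begin{align*}
\|v\|_{L_{2}(T)}^{2} \lesssim h^{3} \sum_{\vertex \in \mathcal{V}(T)} |v(\vertex)|^{2} + h^{5} \sum_{\vertex \in \mathcal{V}(T)} |\nabla v(\vertex)|^{2} + h^{5} \sum_{m \in \mathcal{M}(T)} \sum_{i=1,2} \left| \frac{\partial v}{\partial e_{\perp ,i}}(m) \right|^{2}.
\end{align*}
The natural route is to pull $v$ back to a fixed reference tetrahedron, invoke the unisolvence of the Worsey--Farin macroelement with respect to the vertex value/gradient and edge-midpoint cross-derivative functionals (so that on the reference element the $L_{2}$-norm is equivalent to the Euclidean norm of the nodal data), and then track the powers of $h$ produced by the affine map: $h^{3}$ from the volume, one factor of $h$ per derivative at a vertex, and one factor of $h$ per directional derivative at an edge midpoint. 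Alternatively, one can bound $\|v\|_{L_{2}(T)}^{2}$ subtetrahedron by subtetrahedron, using the partition-of-unity property of the Bernstein basis together with the explicit formulas expressing the B-coefficients of $v$ in terms of its nodal data from Section~\ref{sec;Implementation}, whose multipliers are uniformly bounded under shape-regularity.

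With these pieces in hand, apply the scaled estimate to $w$, insert the bounds of Lemma~\ref{3d_point_wise_bounds}, and use $|F|^{-1} \lesssim h^{-2}$. The term $h^{3}|w(\vertex)|^{2}$ produces $h \sum_{F \in \mathcal{F}^{\vertex}} \|\llbracket u \rrbracket_{F}\|_{L_{2}(F)}^{2}$; the term $h^{5}|\nabla w(\vertex)|^{2}$ produces another copy of that quantity (absorbed into the first) together with $h^{3} \sum_{F \in \mathcal{F}^{\vertex}} ( \|\llbracket \partial u/\partial e_{\perp ,1} \rrbracket_{F}\|_{L_{2}(F)}^{2} + \|\llbracket \partial u/\partial e_{\perp ,2} \rrbracket_{F}\|_{L_{2}(F)}^{2} )$; and the edge-midpoint term produces $h^{3} \sum_{F \in \mathcal{F}^{m}} ( \|\llbracket \partial u/\partial e_{\perp ,1} \rrbracket_{F}\|_{L_{2}(F)}^{2} + \|\llbracket \partial u/\partial e_{\perp ,2} \rrbracket_{F}\|_{L_{2}(F)}^{2} )$. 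Summing over $\vertex \in \mathcal{V}(T)$ and $m \in \mathcal{M}(T)$ yields the first (local) estimate. Summing that over $T \in \mathcal{T}$ yields the second: by shape-regularity each face $F$ appears in only boundedly many element contributions (those $T$ sharing a vertex or an edge with $\overline{F}$), so the element sums collapse into a single sum over $\mathcal{F}(\mathcal{T})$, and boundary faces carry vanishing jumps and contribute nothing.

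I expect the scaled stability estimate to be the crux. It relies on the unisolvence of the Worsey--Farin macroelement for precisely the functionals used to define $E(\cdot)$ --- a known but nontrivial fact --- and, more delicately, the Worsey--Farin split is \emph{not} affine-invariant, since the incenter-based split points are not carried to split points by the reference map; consequently, uniformity of the equivalence constant must be argued from shape-regularity together with a compactness argument over the admissible family of (non-degenerate) split configurations, or else circumvented by working directly through the explicit B-coefficient formulas. The remaining steps --- identifying the nodal data of $w$, substituting Lemma~\ref{3d_point_wise_bounds}, and collapsing the element sums --- are routine bookkeeping, and the hypothesis $\polyorder \le 3$ enters only to guarantee $u_{T} \in \mathcal{WF}(T)$.
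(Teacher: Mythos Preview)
Your proposal is correct and follows essentially the same route as the paper: both invoke the scaled norm equivalence on $\mathcal{WF}(T)$, observe that $u_T - E(u)|_T \in \mathcal{WF}(T)$, and then plug in the pointwise bounds of Lemma~\ref{3d_point_wise_bounds} together with $|F| \approx h^{2}$. Your treatment is actually more careful than the paper's---you justify why $u_T \in \mathcal{WF}(T)$ requires $\polyorder \le 3$, and you flag the genuine subtlety that the Worsey--Farin split is not affine-invariant (so the scaling constant must be controlled via shape-regularity and a compactness argument over admissible split configurations), whereas the paper simply asserts the norm equivalence without comment.
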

\begin{proof} 
    The following scaling argument holds for all $w \in \mathcal{WF}(T)$
    \begin{align}
        \nonumber \| w \|^{2}_{L_{2}(T)} &\approx h^{3}_{T} \Bigg[ \sum_{\vertex \in \mathcal{V}(T)} \left( |w(\vertex)|^{2} + h^{2}_{T} | \nabla w(\vertex)|^{2} \right)  \\[1.5ex]
        &+ h^{2}_{T} \sum_{m \in \mathcal{M}(T)} \left( \left| \frac{\partial w}{\partial e_{\perp ,1}}(m)\right|^{2} + \left| \frac{\partial w}{\partial e_{\perp ,2}}(m)\right|^{2} \right) \Bigg]. \label{scaling_arg}
    \end{align}
    Next, we observe that $u - E(u) \in \mathcal{WF}(T)$ for all $T$. Therefore, upon setting $w = u - E(u)$ in Eq.~\eqref{scaling_arg}, we obtain
    \begin{align}
        \nonumber &\| u - E(u) \|^{2}_{L_{2}(T)}\\[1.5ex]
        \nonumber &\lesssim h^{3}_{T} \sum_{\vertex \in \mathcal{V}(T)} |u_{T}(\vertex) - (E(u))(\vertex)|^{2} + h^{5}_{T} \sum_{\vertex \in \mathcal{V}(T)} |\nabla u_{T}(\vertex) - \nabla (E(u))(\vertex)|^{2}\\[1.5ex]
        \nonumber &+ h^{5}_{T} \sum_{m \in \mathcal{M}(T)} \left| \frac{\partial u_{T}}{\partial e_{\perp ,1}}(m) - \frac{\partial (E(u))}{\partial e_{\perp ,1}}(m) \right|^{2} + h^{5}_{T} \sum_{m \in \mathcal{M}(T)} \left| \frac{\partial u_{T}}{\partial e_{\perp ,2}}(m) - \frac{\partial (E(u))}{\partial e_{\perp ,2}}(m) \right|^{2}\\[1.5ex]
        \nonumber &\lesssim \sum_{\vertex \in \mathcal{V}(T)} \sum_{F \in \mathcal{F}^\vertex} h_{T} \| \llbracket u \rrbracket_{F} \|^{2}_{L_{2}(F)} + \sum_{\vertex \in \mathcal{V}(T)} \sum_{F \in \mathcal{F}^\vertex} h^{3}_{T} \left( \left\| \llbracket \frac{\partial u}{\partial e_{\perp ,1}} \rrbracket_{F} \right\|^{2}_{L_{2}(F)} + \left\| \llbracket \frac{\partial u}{\partial e_{\perp ,2}} \rrbracket_{F} \right\|^{2}_{L_{2}(F)} \right) \\[1.5ex]
        &+ \sum_{m \in \mathcal{M}(T)} \sum_{F \in \mathcal{F}^m} h^{3}_{T} \left( \left\| \llbracket \frac{\partial u}{\partial e_{\perp ,1}} \rrbracket_{F} \right\|^{2}_{L_{2}(F)} + \left\| \llbracket \frac{\partial u}{\partial e_{\perp ,2}} \rrbracket_{F} \right\|^{2}_{L_{2}(F)} \right). \label{l2_bound_3d}
    \end{align}
    Here, we have used Eqs.~\eqref{3d_point_k1_bound}---\eqref{3d_norm2_k1_bound} from Lemma~\ref{3d_point_wise_bounds}, and set
    \begin{align}
        |F| \approx h_{T}^{2},
    \end{align}
   above. The final results is obtained by setting $h_{T} = h$ in Eq.~\eqref{l2_bound_3d}.
\end{proof}

\begin{remark}
    In Lemma~\ref{lemma_h2_3d}, we insist that the polynomial order $\polyorder \leq 3$. This is necessary in order to avoid pathological cases in which $u \in V^{\polyorder}_{\mathcal{T}}$ resides in the kernel of $E(\cdot)$. In particular, we note that bubble functions of degree $\polyorder = 4$ can reside in the kernel. More precisely, a generic $\polyorder = 4$ bubble function has $(1/6)(\polyorder+1)(\polyorder+2)(\polyorder+3) = 35$ degrees of freedom. Unfortunately, the operator $E(\cdot)$ is dependent on only 28 degrees of freedom (function values and derivative values) on the boundary of each element $T \in \mathcal{T}$. Therefore, if $\polyorder \geq 4$, it is possible for $E(u)$ to vanish, while some interior degrees of freedom for $u$ are non-zero. Under these circumstances, the inequalities in  Lemma~\ref{lemma_h2_3d} will not generally hold, as the left hand side can assume positive values, while the right hand side vanishes.
\end{remark}

\begin{definition}[Projection Operator] Consider a projection operator,
    $I_{h_b}(\cdot): L_{2}(\Omega) \rightarrow V_{\mathcal{T}_b}^{\polyorder}$ which transfers an $L_2$-function defined on mesh $\mathcal{T}_a$ to mesh $\mathcal{T}_b$.
    \label{proj_def}
\end{definition}

\begin{assumption}[Bound on Projection Operator] Let us define a function $w \in L_{2}(\Omega)$. The projection operator $I_{h_b}$ from Definition~\ref{proj_def} is bounded such that
\begin{align*}
     \left\| I_{h_b}(w) \right\|_{L_2(\Omega)} \lesssim C_{V} \left\| w \right\|_{L_2(\Omega)}.
\end{align*}
Here, $C_{V}$ is a constant independent of $w$.
\label{proj_assume}
\end{assumption}

\begin{lemma}[Projection Operator Error] Let us define a function $w \in H^{q}(\Omega)$. Under these circumstances, we have the following error estimate for the projection operator $I_{h_b}$ from Definition~\ref{proj_def} 
\begin{align*}
    \left\| w - I_{h_b}(w) \right\|_{L_2(\Omega)} \lesssim h_{b}^{\mu} |w|_{H^{q}(\Omega)}, %\label{interp_result_w}
\end{align*}
where $\mu = \min(\polyorder+1, q)$.  
\label{lemma_interp_bound_3d_k1}
\end{lemma}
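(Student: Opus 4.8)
The plan is to establish this as a standard $L_2$-projection approximation estimate, adapting the classical Bramble–Hilbert/Céa-type argument to the setting of the target mesh $\mathcal{T}_b$ and its piecewise-polynomial space $V^{\polyorder}_{\mathcal{T}_b}$. First I would observe that $I_{h_b}$, being the $L_2$-projection onto $V^{\polyorder}_{\mathcal{T}_b}$, is the best approximation in the $L_2(\Omega)$-norm, so that for any $v_h \in V^{\polyorder}_{\mathcal{T}_b}$ we have $\| w - I_{h_b}(w) \|_{L_2(\Omega)} \leq \| w - v_h \|_{L_2(\Omega)}$. (Alternatively, one can invoke Assumption~\ref{proj_assume}: writing $w - I_{h_b}(w) = (w - v_h) - I_{h_b}(w - v_h)$ for $v_h$ the local polynomial interpolant, so $\| w - I_{h_b}(w) \|_{L_2(\Omega)} \leq (1 + C_V)\| w - v_h \|_{L_2(\Omega)}$, which avoids relying on the orthogonal-projection characterization and keeps the argument valid even if $I_{h_b}$ is only a bounded projection.)

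Next I would choose $v_h$ to be a suitable element-wise polynomial approximation of $w$ — for instance the Scott–Zhang or averaged Taylor quasi-interpolant of degree $\polyorder$ on $\mathcal{T}_b$ — and invoke the classical local approximation result (see~\cite{brenner2008mathematical}, Chapter 4): on each element $T \in \mathcal{T}_b$, $\| w - v_h \|_{L_2(T)} \lesssim h_T^{\mu} |w|_{H^{\mu}(T)}$ with $\mu = \min(\polyorder+1, q)$, where the case $q < \polyorder+1$ is handled because $w$ only has $H^q$-regularity so the interpolation operator can only reproduce polynomials up to the available smoothness. Squaring and summing over all $T \in \mathcal{T}_b$, using $h_T \leq h_b$ and the additivity of the Sobolev seminorm, gives $\| w - v_h \|^2_{L_2(\Omega)} \lesssim h_b^{2\mu} \sum_{T} |w|^2_{H^{\mu}(T)} = h_b^{2\mu} |w|^2_{H^{\mu}(\Omega)} \leq h_b^{2\mu} |w|^2_{H^{q}(\Omega)}$ (the last step since $\mu \leq q$). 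Combining with the best-approximation (or boundedness) bound yields the claimed estimate.

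The main obstacle — more a matter of care than of difficulty — is ensuring that the regularity bookkeeping is consistent: the exponent $\mu = \min(\polyorder+1, q)$ must match both the polynomial degree $\polyorder$ reproduced by the projection and the limited smoothness $q$ of $w$, and one must be careful that when $w$ is the output of the spline interpolation step (so in practice $w \in W_{\mathcal{T}_a} \subset H^2(\Omega)$, giving $q = 2$), the estimate degrades gracefully rather than being vacuous. A secondary technical point is that $v_h$ and $w$ live on different meshes than the source data, so one should state the local approximation estimate purely in terms of $\mathcal{T}_b$ and a shape-regularity assumption on $\mathcal{T}_b$ (implicitly assumed throughout), which makes the per-element constants in $\lesssim$ uniform. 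No interplay with $\mathcal{T}_a$ or with the smoothing operator $E(\cdot)$ enters here; this lemma is self-contained and purely an approximation-theory fact about $I_{h_b}$.
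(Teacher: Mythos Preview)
Your argument is correct and is the standard Bramble--Hilbert/best-approximation proof of this estimate. The paper, however, does not actually prove the lemma: it simply states that ``this result is well known'' and cites~\cite{stogner2007c1}, Section~2.3. So your proposal supplies the details the paper omits; the content of your argument (best-approximation property of the $L_2$-projection, elementwise polynomial approximation, summation with shape-regularity) is exactly what underlies the cited reference. Your additional remarks on the regularity bookkeeping for $\mu=\min(k+1,q)$ and on the alternative route via Assumption~\ref{proj_assume} are correct and useful observations, though not needed here since the paper is content to defer to the literature.
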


\begin{proof}
    This result is well known. One may consult~\cite{stogner2007c1}, Section 2.3, as an example.
\end{proof}

\subsection{Key Results}

In what follows, we will combine the previous results, and construct a theorem which governs the order of accuracy for the smoothing and projection process.

\begin{theorem}[Solution Transfer Error Estimate]
Let us suppose that  Assumptions~\ref{truncation_3d}, \ref{assumption_jump_3d}, and \ref{proj_assume} hold, and $u \in H^{q}(\Omega)$ is the exact solution of our partial differential equation. Under these circumstances, the error between the solution on the initial mesh, $u_{h_a}$, and the transferred solution on the final mesh, $I_{h_b}(E(u_{h_a}))$, is governed by the following inequality
\begin{align}
    \left\| u_{h_a} - I_{h_b}(E(u_{h_a})) \right\|_{L_2(\Omega)} \lesssim \mathcal{C}_{u} h_a^{\polyorder+1}  + h^{\mu}_b |u|_{H^{q}(\Omega)}. 
    \label{solution_error_estimate}
\end{align}
Here, $\mathcal{C}_u$ is a constant that depends on $u$ and its gradient, and which is independent of $h_{a}$ and $h_{b}$. Furthermore, the exponent $\mu = \min(\polyorder+1, q)$ and we require that $1 \leq \polyorder \leq 3$. 
\label{error_estimate_theorem}
\end{theorem}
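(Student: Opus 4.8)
The plan is to proceed by the triangle inequality, inserting the exact solution $u$ and its $L_2$-projection $I_{h_b}(u)$ onto $\mathcal{T}_b$ as intermediate quantities. Using the linearity of the projection, write
\begin{equation*}
u_{h_a} - I_{h_b}\bigl(E(u_{h_a})\bigr) = \bigl(u_{h_a} - u\bigr) + \bigl(u - I_{h_b}(u)\bigr) + I_{h_b}\bigl(u - E(u_{h_a})\bigr),
\end{equation*}
and estimate the three terms in $L_2(\Omega)$ separately. The first term is controlled directly by the truncation error estimate, $\| u_{h_a} - u \|_{L_2(\Omega)} \lesssim C_I h_a^{\polyorder+1}$ (Assumption~\ref{truncation_3d}). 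The second term is precisely the projection error of $u \in H^q(\Omega)$ onto the target mesh, so Lemma~\ref{lemma_interp_bound_3d_k1} gives $\| u - I_{h_b}(u) \|_{L_2(\Omega)} \lesssim h_b^{\mu} |u|_{H^q(\Omega)}$ with $\mu = \min(\polyorder+1, q)$. For the third term, Assumption~\ref{proj_assume} reduces matters to bounding $\| u - E(u_{h_a}) \|_{L_2(\Omega)}$, since $\| I_{h_b}(u - E(u_{h_a})) \|_{L_2(\Omega)} \lesssim C_V \| u - E(u_{h_a}) \|_{L_2(\Omega)}$.

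To estimate $\| u - E(u_{h_a}) \|_{L_2(\Omega)}$ I would split once more, $\| u - E(u_{h_a}) \|_{L_2(\Omega)} \le \| u - u_{h_a} \|_{L_2(\Omega)} + \| u_{h_a} - E(u_{h_a}) \|_{L_2(\Omega)}$. The first term is again $\lesssim C_I h_a^{\polyorder+1}$ by Assumption~\ref{truncation_3d}. The second term is where the WF-specific machinery enters: apply Lemma~\ref{lemma_h2_3d}, valid precisely because $1 \le \polyorder \le 3$, to express $\| u_{h_a} - E(u_{h_a}) \|_{L_2(\Omega)}^2$ as a face sum over $\mathcal{F}(\mathcal{T}_a)$ of the jump quantities $\|\llbracket u_{h_a} \rrbracket_F\|_{L_2(F)}^2$ weighted by $h_a$ and $\|\llbracket \partial u_{h_a}/\partial e_{\perp,i} \rrbracket_F\|_{L_2(F)}^2$ weighted by $h_a^3$. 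Substituting the jump identities of Assumption~\ref{assumption_jump_3d} converts these into $O(h_a^{2\polyorder+1})$ and $O(h_a^{2\polyorder-1})$, so that, after summation, $\| u_{h_a} - E(u_{h_a}) \|_{L_2(\Omega)}^2 \lesssim \mathcal{C}\, h_a^{2\polyorder+2}$, i.e.\ $\| u_{h_a} - E(u_{h_a}) \|_{L_2(\Omega)} \lesssim \mathcal{C}^{1/2} h_a^{\polyorder+1}$. Collecting the pieces, every $h_a$-dependent contribution is $O(h_a^{\polyorder+1})$ and the only $h_b$-dependent contribution is $h_b^{\mu}|u|_{H^q(\Omega)}$, yielding the desired estimate~\eqref{solution_error_estimate} with $\mathcal{C}_u$ absorbing $C_I$, $C_V$, the jump constants $C_{II},C_{III},C_{IV}$, and the shape-regularity constants; these all depend on $u$ and its first derivatives, as required.

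The step demanding the most care is the passage from Lemma~\ref{lemma_h2_3d} to a clean power of $h_a$: one must track the $|F| \approx h_a^2$ scaling together with the $h_a$ and $h_a^3$ weights, and recognize that the face sum of the squared jump bounds aggregates to a fixed $u$-dependent constant times $h_a^{2\polyorder+1}$ (respectively $h_a^{2\polyorder-1}$), because the underlying element-local approximation errors sum to a global Sobolev seminorm rather than growing with the element count. This is also exactly where the hypothesis $\polyorder \le 3$ is indispensable, since Lemma~\ref{lemma_h2_3d}, and hence the whole estimate, fails for $\polyorder \ge 4$ owing to quartic bubble functions in the kernel of $E(\cdot)$. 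The remaining steps are routine applications of the stated assumptions and lemmas.
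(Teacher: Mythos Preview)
Your proposal is correct and follows essentially the same approach as the paper: the same three-term triangle-inequality decomposition via $u$ and $I_{h_b}(u)$, the same use of Assumptions~\ref{truncation_3d}, \ref{assumption_jump_3d}, \ref{proj_assume} and Lemmas~\ref{lemma_h2_3d}, \ref{lemma_interp_bound_3d_k1}, and the same secondary split $\|u - E(u_{h_a})\| \le \|u - u_{h_a}\| + \|u_{h_a} - E(u_{h_a})\|$. If anything, your remark about the face sum aggregating to a fixed $u$-dependent constant (rather than growing with the number of faces) is slightly more careful than the paper's corresponding passage.
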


\begin{proof}
Let us begin by using the Triangle inequality
\begin{align}
    \nonumber \left\| u_{h_a} - I_{h_b}(E(u_{h_a})) \right\|_{L_2(\Omega)} &= \left\| u_{h_a} - I_{h_b}(E(u_{h_a})) + u -u +I_{h_b}(u) - I_{h_b}(u) \right\|_{L_2(\Omega)} \\[1.5ex]
    &\leq \left\| u_{h_a} - u \right\|_{L_2(\Omega)} + \left\| u - I_{h_b}(u) \right\|_{L_2(\Omega)} + \left\| I_{h_b}(u) - I_{h_b}(E(u_{h_a})) \right\|_{L_2(\Omega)}.
    \label{total_eqn}
\end{align}
We can extract three terms from the right hand side of Eq.~\eqref{total_eqn}
\begin{align}
    &\left\| u_{h_a} - u \right\|_{L_2(\Omega)}, \label{term_one}\\[1.5ex]
    &\left\| u - I_{h_b}(u) \right\|_{L_2(\Omega)}, \label{term_two} \\[1.5ex]
    &\left\| I_{h_b}(u) - I_{h_b}(E(u_{h_a})) \right\|_{L_2(\Omega)}. \label{term_three}
\end{align}
The first term (Eq.~\eqref{term_one}) can be bounded above by Assumption~\ref{truncation_3d} 
\begin{align}
    \left\| u - u_{h_a}  \right\|_{L_2(\Omega)} \lesssim C_{I} h^{\polyorder+1}_{a}. \label{term_one_final}
\end{align}
In addition, the second term (Eq.~\eqref{term_two}) can be bounded above in accordance with Lemma~\ref{lemma_interp_bound_3d_k1}
\begin{align}
    \left\| u - I_{h_b}(u) \right\|_{L_2(\Omega)} \lesssim h_{b}^{\mu} |u|_{H^{q}(\Omega)}. \label{term_two_final}
\end{align}
Finally, the third term (Eq.~\eqref{term_three}) can be bounded above in accordance with Assumption~\ref{proj_assume} and the Triangle inequality
\begin{align}
    \nonumber \left\| I_{h_b}(u) - I_{h_b}(E(u_{h_a})) \right\|_{L_2(\Omega)} &\lesssim C_{V} \left\| u - E(u_{h_a})\right\|_{L_2(\Omega)} \\[1.5ex]
    \nonumber &= C_{V} \left\| u - E(u_{h_a}) + u_{h_a} - u_{h_a}\right\|_{L_2(\Omega)}\\[1.5ex]
    &\lesssim C_{V} \left( \left\| u  - u_{h_a}\right\|_{L_2(\Omega)} +  \left\| u_{h_a} - E(u_{h_a})\right\|_{L_2(\Omega)} \right). \label{third_term_exp}
\end{align}
The right hand side of Eq.~\eqref{third_term_exp} requires further attention. It can be simplified by using Assumption~\ref{truncation_3d} on the first term on the right hand side, and Lemma~\ref{lemma_h2_3d} on the second term on the right hand side
\begin{align}
    \left\| u - u_{h_a}  \right\|_{L_2(\Omega)} &\lesssim C_{I} h^{\polyorder+1}_{a}, \label{term_zero_exp} \\[1.5ex] \nonumber
     \| u_{h_a} - E(u_{h_a}) \|_{L_2(\Omega)} &\lesssim \sum_{F \in \mathcal{F}(\mathcal{T}_{a})} h_{a}^{1/2} \| \llbracket u_{h_a} \rrbracket_{F} \|_{L_{2}(F)} \\[1.5ex]
     &+ \sum_{F \in \mathcal{F}(\mathcal{T}_{a})} h^{3/2}_{a} \left( \left\| \llbracket \frac{\partial u_{h_a}}{\partial e_{\perp ,1}} \rrbracket_{F} \right\|_{L_{2}(F)} + \left\| \llbracket \frac{\partial u_{h_a}}{\partial e_{\perp ,2}} \rrbracket_{F} \right\|_{L_{2}(F)} \right).
    \label{term_one_exp}
\end{align}
Now, let us substitute Eqs.~\eqref{term_zero_exp} and \eqref{term_one_exp} into Eq.~\eqref{third_term_exp}
\begin{align}
    \nonumber &\left\| I_{h_b}(u) - I_{h_b}(E(u_{h_a})) \right\|_{L_2(\Omega)} \\[1.5ex] 
    \nonumber &\lesssim C_{V} \Bigg(C_{I} h^{\polyorder+1}_{a} + \sum_{F \in \mathcal{F}(\mathcal{T}_{a})} h_{a}^{1/2} \| \llbracket u_{h_a} \rrbracket_{F} \|_{L_{2}(F)} \\[1.5ex]
    &+ \sum_{F \in \mathcal{F}(\mathcal{T}_{a})} h^{3/2}_{a} \left( \left\| \llbracket \frac{\partial u_{h_a}}{\partial e_{\perp ,1}} \rrbracket_{F} \right\|_{L_{2}(F)} + \left\| \llbracket \frac{\partial u_{h_a}}{\partial e_{\perp ,2}} \rrbracket_{F} \right\|_{L_{2}(F)} \right)  \Bigg).
\end{align}
Next, we use Assumption~\ref{assumption_jump_3d} in order to obtain
\begin{align}
    \nonumber \left\| I_{h_b}(u) - I_{h_b}(E(u_{h_a})) \right\|_{L_2(\Omega)} &\lesssim C_{V} \left(C_{I} h^{\polyorder+1}_{a} + \sum_{F \in \mathcal{F}(\mathcal{T}_a)} h_a^{\polyorder+1} \left( C_{II} + C_{III} + C_{IV} \right) \right) \\[1.5ex]
    &\lesssim C_{V} \left(C_{I}  +  \sum_{F \in \mathcal{F} (\mathcal{T}_a)}  \left( C_{II} + C_{III} + C_{IV} \right) \right) h_a^{\polyorder+1}.\label{term_three_final}
\end{align}
Lastly, combining Eqs.~\eqref{total_eqn}, \eqref{term_one_final}, \eqref{term_two_final}, and \eqref{term_three_final}, yields
\begin{align}
    \left\| u_{h_a} - I_{h_b}(E(u_{h_a})) \right\|_{L_2(\Omega)} \lesssim \left[C_{I} + C_{V} \left(C_{I}  +  \sum_{F \in \mathcal{F} (\mathcal{T}_a)}  \left( C_{II} + C_{III} + C_{IV} \right) \right) \right] h_a^{\polyorder+1} + h_{b}^{\mu} |u|_{H^{q}(\Omega)}.
\end{align}
We can obtain the desired result (Eq.~\eqref{solution_error_estimate}) by setting $\mathcal{C}_u$ equal to the quantity in square brackets. 
\end{proof}

We conclude this section by constructing a discrete maximum principle for the spline-interpolated solution.

\begin{theorem}[Discrete Maximum Principle]
    The smoothed WF representation on each tetrahedron $T$ must satisfy the following maximum principle
    \begin{align}
        \min_{i,j,\ell,m} \left(c_{ij\ell m}^{\alpha}\right) \leq u(x)\Big|_{T^{\alpha}} \leq \max_{i,j,\ell,m} \left(c_{ij\ell m}^{\alpha}\right), \qquad \forall \alpha
    \end{align}
    where $T = T^{\alpha = 1}\cup \hspace{0.25mm}T^{\alpha = 2}\cup \hspace{0.25mm}T^{\alpha = 3}\cdots \cup T^{\alpha = 11}\cup \hspace{0.25mm}T^{\alpha = 12}$ is a non-overlapping partition of $T$, and upon restricting $u$ to each subtetrahedron, one obtains a cubic polynomial (see definitions~\ref{def:bbpoly} and \ref{def:B-form}).
    \label{max_theorem}
\end{theorem}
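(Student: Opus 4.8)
The plan is to reduce the discrete maximum principle to a purely local, per-subtetrahedron statement about cubic polynomials written in B-form, and then invoke the convex-hull (variation diminishing) property of the Bernstein basis. First I would fix an index $\alpha$ and restrict attention to a single subtetrahedron $T^{\alpha}$, on which, by Definition~\ref{def:B-form}, the smoothed WF representation is exactly a cubic polynomial $\polyfunction^{\alpha} = \sum_{i+j+\ell+m=\polyorder} c_{ij\ell m}^{\alpha} B_{ij\ell m}(\lambda^{\alpha})$. The barycentric coordinates $\lambda^{\alpha} = (\lambda_1^{\alpha},\lambda_2^{\alpha},\lambda_3^{\alpha},\lambda_4^{\alpha})$ of any point $x \in T^{\alpha}$ are nonnegative and sum to one, hence each Bernstein polynomial $B_{ij\ell m}(\lambda^{\alpha})$ is nonnegative on $T^{\alpha}$, and by Definition~\ref{def:bbpoly} they form a partition of unity. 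Therefore $u(x)|_{T^{\alpha}}$ is a convex combination of the B-coefficients $\{c_{ij\ell m}^{\alpha}\}$ for every $x \in T^{\alpha}$.

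The second step is the elementary observation that any convex combination of a finite set of real numbers lies between the smallest and the largest of those numbers. Concretely, writing $t_{ij\ell m} = B_{ij\ell m}(\lambda^{\alpha}(x)) \geq 0$ with $\sum t_{ij\ell m} = 1$, we have
\begin{align*}
    u(x)\big|_{T^{\alpha}} = \sum_{i+j+\ell+m=3} t_{ij\ell m}\, c_{ij\ell m}^{\alpha} \leq \Big(\max_{i,j,\ell,m} c_{ij\ell m}^{\alpha}\Big)\sum_{i+j+\ell+m=3} t_{ij\ell m} = \max_{i,j,\ell,m} c_{ij\ell m}^{\alpha},
\end{align*}
and symmetrically $u(x)|_{T^{\alpha}} \geq \min_{i,j,\ell,m} c_{ij\ell m}^{\alpha}$. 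Since $\alpha$ was arbitrary, and since $T = \bigcup_{\alpha=1}^{12} T^{\alpha}$ is a non-overlapping partition so that every $x \in T$ lies in some $T^{\alpha}$, this establishes the claimed inequality for all $\alpha$.

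There is essentially no hard analytic obstacle here; the result is a direct consequence of the convex-hull property of Bernstein–Bézier form. The only point requiring a little care is making sure the statement is interpreted correctly: the bound is \emph{per subtetrahedron}, using the B-coefficients $c_{ij\ell m}^{\alpha}$ local to $T^{\alpha}$ (which, as discussed around Table~\ref{tab:Mappings}, are selected from the $91$ global coefficients via the appropriate mapping), rather than a single global bound over all of $T$. I would therefore state explicitly that the nonnegativity of $B_{ij\ell m}$ uses $\lambda^{\alpha}_r \geq 0$, which holds precisely because $x$ is taken inside $T^{\alpha}$; if one instead evaluated the $\alpha$-th polynomial piece outside $T^{\alpha}$, some barycentric coordinate would be negative and the convexity argument would fail. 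One may optionally remark that a global bound $\min_{\alpha,i,j,\ell,m} c_{ij\ell m}^{\alpha} \leq u(x)|_T \leq \max_{\alpha,i,j,\ell,m} c_{ij\ell m}^{\alpha}$ follows immediately by taking the min/max over $\alpha$ as well, but this is not needed for the stated theorem.
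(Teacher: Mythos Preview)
Your proposal is correct and follows essentially the same approach as the paper: both arguments reduce to the observation that the Bernstein basis on each subtetrahedron $T^{\alpha}$ is nonnegative and forms a partition of unity, so $u(x)|_{T^{\alpha}}$ is a convex combination of the $c_{ij\ell m}^{\alpha}$ and therefore lies between their minimum and maximum. Your write-up is somewhat more detailed (explicitly noting where nonnegativity of $\lambda^{\alpha}$ is used and adding the optional global bound), but the underlying idea is identical.
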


\begin{proof}
    The proof follows immediately by examining a single subtetrahedron $T^{\alpha}$, and by noting that the Bernstein polynomials are a non-negative partition of unity on this subtetrahedron. In other words
    \begin{align*}
        \sum_{ij\ell m}B_{ij\ell m}(\lambda^{\alpha}) = 1, \qquad B_{ij\ell m}(\lambda^{\alpha}) \geq 0, 
    \end{align*}
    implies that
    \begin{align*}
        &\min_{i,j,\ell ,m} \left(c_{ij\ell m}^{\alpha} \right) = \left(\min_{i,j,\ell ,m} \left(c_{ij\ell m}^{\alpha} \right)\right) \sum_{ij\ell m}B_{ij\ell m}(\lambda^{\alpha}) \leq  \sum_{ij\ell m} c_{ij\ell m}^{\alpha} B_{ij\ell m}(\lambda^{\alpha}) = u(x)\Big|_{T^{\alpha}}, \\[1.0ex]
         &\max_{i,j,\ell ,m} \left(c_{ij\ell m}^{\alpha} \right) = \left(\max_{i,j,\ell ,m} \left(c_{ij\ell m}^{\alpha} \right)\right) \sum_{ij\ell m}B_{ij\ell m}(\lambda^{\alpha}) \geq \sum_{ij\ell m} c_{ij\ell m}^{\alpha} B_{ij\ell m}(\lambda^{\alpha}) = u(x)\Big|_{T^{\alpha}}.
    \end{align*}    
    A similar proof appears in Section 2 of~\cite{lohmann2017flux}.
\end{proof}

\begin{remark}
    In accordance with Theorem~\ref{max_theorem}, one may precisely control the maximum or minimum values of the smoothed WF representation of the solution. In particular, if the Bernstein representation is used to implement the WF-spline basis, one may directly modify the Bernstein coefficients in order to ensure that the solution $u$ remains within the desired range.
\end{remark}

\section{Numerical Results}
\label{sec;Results}
In this section, we assess the WF solution-transfer process on the basis of order of accuracy and mass conservation. Additionally, we qualitatively assess the visualization properties of the WF transfer process. We have narrowed the  focus of these studies to compare the WF transfer with linear interpolation---which provides a standard of performance that we can measure our method against---for polynomial orders $\polyorder = 1$ and $\polyorder = 2$. In order to evaluate the performance of the solution transfer methods, we tested their ability to transfer between a series of unstructured tetrahedral \emph{source} meshes and a similarly sized series of unstructured tetrahedral \emph{target} meshes. The properties of both mesh series are summarized in Table~\ref{tab:grid_info}. 
\begin{table}[h!]
\begin{center}
\begin{tabular}{|p{1.5cm}|p{2cm}|p{2cm}|p{1.5cm}|}
\hline
Grid Sequence Number& Source \newline Elements& Target \newline Elements& $h$\\
\hline
1& 192&	194&	0.173340\\
\hline
2 & 1536&	1544&	0.086670\\
\hline
3 & 12288&	12388&	0.043335\\
\hline
4 & 98304&	98311&	0.021668\\
\hline
5 & 786432&	783998&	0.010834\\
\hline
\end{tabular}
\end{center}
\caption{Properties of the tetrahedral source and target meshes developed for the mass conservation and order of accuracy studies. Note $h$ is the estimated element size, and is calculated by $1/\sqrt[3]{\# ofelements}$.}
\label{tab:grid_info}
\end{table}

We selected two functions for the numerical studies presented in this work,
\begin{align}
    u_1 = \exp{\left(-30\left((x-0.5)^2+(y-0.5)^2+(z-0.5)^2\right)\right)}, \hspace{2mm} x,y,z \in [0,1],
    \label{eqn:u_1}
\end{align}
\begin{align}
    u_2 = &\tanh{\big(20\big((x-0.5)+0.3\sin{(-10(y-0.5))}-0.3\sin{(-5(z-0.6))}\big)\big)},\nonumber\\&x,y,z \in [0,1],
     \label{eqn:u_2}
\end{align}
which were taken from Alauzet~\cite{alauzet2016parallel} and shifted from $[-0.5,0.5]^3$ to $[0,1]^3$ to avoid any convenient error cancellations from being centered about the origin. The first function, $u_1$, given in Eq.~\ref{eqn:u_1}, is a Gaussian function that replicates the vortices found in CFD simulation~\cite{alauzet2016parallel}. The second function, $u_2$, given in Eq.~\ref{eqn:u_2}, is a hyperbolic tangent function, which while remaining smooth, features a significant gradient. Both of these functions are visualized on the top left of Figures~\ref{fig:gauss_viz} through~\ref{fig:tanh_planes_viz}. For the experiments, we used an $L_2$-projection to project $u_1$ and $u_2$ onto the source grid, thus producing a piecewise polynomial, discontinuous function on the source mesh. The purpose of this function was to mimic the numerical solution provided by a discontinuous finite element method.  

\subsection{Order of Accuracy}
\label{sec:Order of Accuracy}
\noindent For the order of accuracy study, the $L_2$-error is computed as follows
\begin{align}
     \varepsilon_{L_2}= \sqrt{\frac{1}{|\Omega|}\int_{\Omega}\left(u-g\right)^2d\Omega}.
     \label{eqn:L2_error}
\end{align}
Note that in this context, $u$ refers, generally, to either of the projected, discontinuous functions, $u_1$ or $u_2$, on the source mesh. The transferred solution on the target mesh is referred to as $g$. The integral in this expression was computed over the whole domain $\Omega$, using the tensor product of three, 41-point Gauss quadrature rules. This yielded a sufficiently accurate approximation of the integral based on the error values at 68,921 quadrature points. The rationale for this method of measuring error is due to our limited options of how such integration can be performed. Since there are only two meshes available, we can compute the L$_2$-error of the solution on the source mesh, on the target mesh, or on a rectilinear grid of points. We chose the latter, because the error function is defined based on solution functions on two different meshes, and the natural location of integration is ambiguous. In addition, we wanted to maintain a level of consistency by evaluating our error at the same points across multiple grids. We suspect that if the global quadrature were a problem, we would see consistent, systematic errors across multiple cases and those are not present in our order of accuracy studies.

In this work we also look at the convergence of the gradient magnitude of the transferred solution for both the linear interpolation   and WF methods. The results of our $L_2$-error studies are shown for the Gaussian function $u_1$ and its gradient magnitude in Figure~\ref{fig:gauss_l2}. In addition, Figure~\ref{fig:tanh_L2} shows the error of the transferred solution and its gradient magnitude for the hyperbolic tangent function, $u_2$. 
\begin{figure}[h!]
\centering
    \begin{subfigure}{0.5\textwidth}
        \centering
        \includegraphics[width=1.0\textwidth]{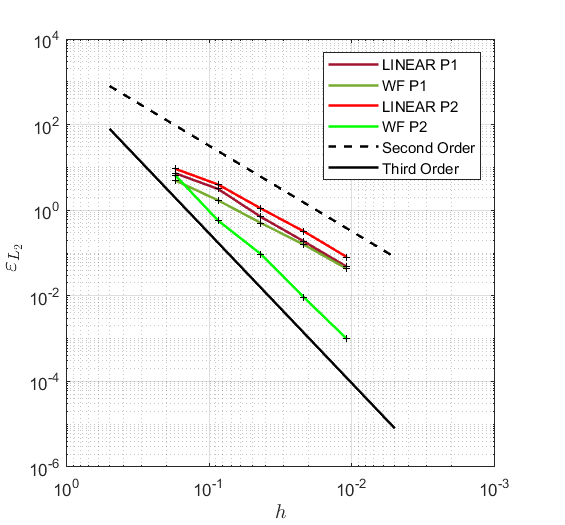}
    \end{subfigure}%
    \begin{subfigure}{0.5\textwidth}
        \centering
        \includegraphics[width=1.0\textwidth]{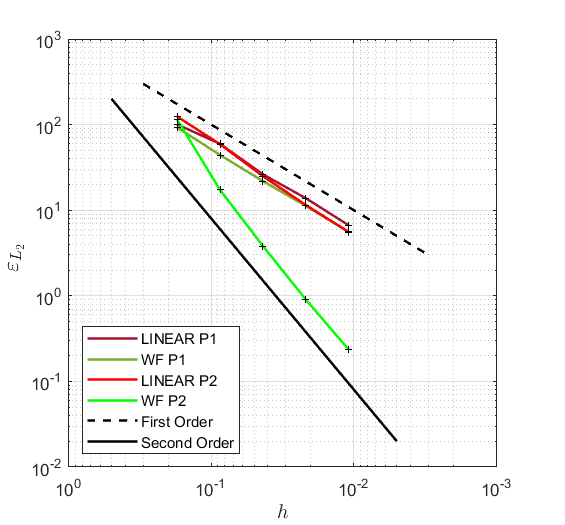}
    \end{subfigure}
    \caption{Left, the $L_2$-error of the Gaussian function $u_1$, and right, the $L_2$-error of the gradient magnitude of $u_1$, with linear interpolation for $\mathcal{P}_1$ and $\mathcal{P}_2$ data shown in shades of red and WF transfer for $\mathcal{P}_1$ and $\mathcal{P}_2$ data shown in shades of green.}
    \label{fig:gauss_l2}
\end{figure}

The rate of convergence for the WF transfer, shown in Figure~\ref{fig:gauss_l2}, meets our predicted/expected convergence rate of $\polyorder+1$ for both  $\polyorder = 1$ and $\polyorder=2$, i.e.~we see a second order convergence rate for $\polyorder=1$ and third order rate for $\polyorder=2$. We can also appreciate the clear separation in solution accuracy between the $\polyorder=2$ WF transfer and $\polyorder=2$ linear interpolation of the solution. This separation is even more apparent in the study of the gradient magnitude. For this study, it appears that the order of accuracy of the gradient magnitude converges at $\polyorder$, thus $\polyorder=2$ WF transfer sees second order convergence, and $\polyorder=1$ WF transfer sees first order convergence. Meanwhile, linear interpolation is first order convergent regardless of the order of the data being transferred. 
\begin{figure}[h!]
\centering
    \begin{subfigure}{0.5\textwidth}
        \centering
        \includegraphics[width=1.0\textwidth]{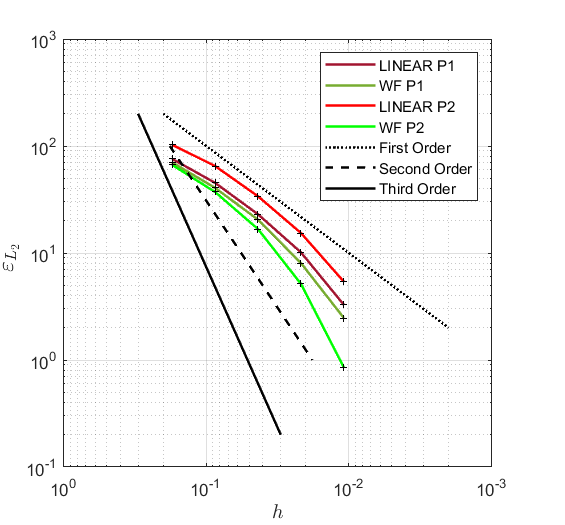}
    \end{subfigure}%
    \begin{subfigure}{0.5\textwidth}
        \centering
        \includegraphics[width=1.0\textwidth]{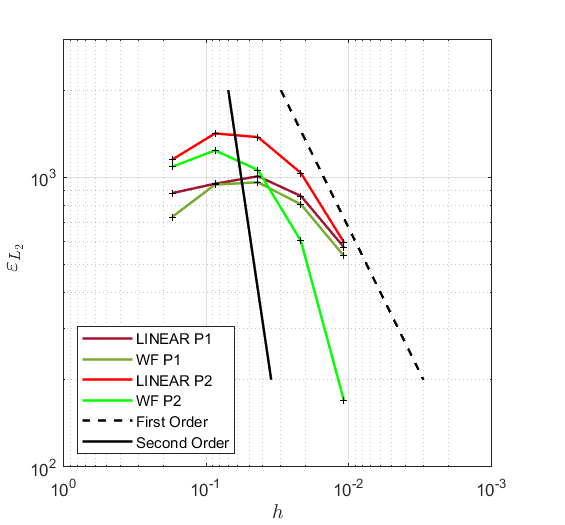}
    \end{subfigure}
    \caption{Left, the $L_2$-error of the hyperbolic tangent function $u_2$, and right, the $L_2$-error of the gradient magnitude of $u_2$, with linear interpolation for $\mathcal{P}_1$ and $\mathcal{P}_2$ data shown in shades of red and WF transfer for $\mathcal{P}_1$ and $\mathcal{P}_2$ data shown in shades of green.}
    \label{fig:tanh_L2}
\end{figure}

The $L_2$-error study for the hyperbolic tangent function, shown in Figure~\ref{fig:tanh_L2}, was hindered by an initial lack of resolution necessary to accurately resolve the features of the field, hence why there is an arc to the convergence for all methods tested. WF transfer for both $\polyorder=1$ and $\polyorder=2$ eventually achieves our expected convergence rate of $\polyorder+1$, but only on finer grids. For $\polyorder=1$ and $\polyorder=2$, WF transfer remains more accurate than linear interpolation for both the solution and its gradient magnitude, and the difference in accuracy is especially apparent for $\polyorder=2$. 

In summary, this study indicates that WF transfer is particularly well-suited for applications which require high fidelity, as it better preserves the accuracy of the solution and the gradient magnitude of the solution. We also confirmed the expected convergence rates for both $\polyorder=1$ and $\polyorder=2$ WF transfer, which were second and third order, respectively. We can deduce from this study, and our previous studies in 2D~\cite{larose2025spline}, that provided the function is smooth, the rate of convergence for the WF transfer will behave as theoretically predicted. Meanwhile, functions which contain sharp features will require sufficient mesh resolution before the expected convergence rates are realized. For this reason, and for the sake of brevity, we have limited our study of the order of accuracy to just two functions---one that represents each regime. 

\subsection{Mass Conservation} \label{sec:mass_conservation_tests}

We performed a study on the conservative nature of WF transfer, linear interpolation, and $L_2$-projection transfer for the Gaussian function in Figure~\ref{fig:mass_conservation}. We have included the $L_2$-projection transfer  in order to create additional data for comparison purposes. We note that the $L_2$-projection transfer process is simply the WF transfer process without the synchronization or spline interpolation steps. So, in this sense it offers a meaningful reference point.
\begin{figure}[h!]
    \centering
    \includegraphics[width=0.75\linewidth]{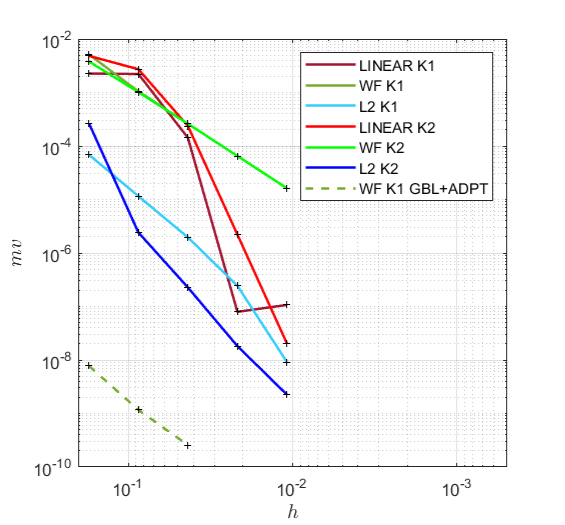}
    \caption{A comparison of the mass variation for the solution transfer methods: linear, WF, $L_2$-Projection, and WF $\polyorder=1$ with global $L_2$-projection and adaptive quadrature. Results for both $\polyorder=1$ and $\polyorder=2$ are shown for linear (shades of red), WF (shades of green), and $L_2$-Projection (shades of blue).}
    \label{fig:mass_conservation}
\end{figure}

In our conservation tests, all $L_2$-projections were computed using a 56-point quadrature rule for tetrahedra~\cite{shunn2012symmetric}, which is capable of exactly integrating a polynomial of degree 9.

In Figure~\ref{fig:mass_conservation}, we see that, despite its high-order accuracy, the WF solution transfer process is often less conservative than linear interpolation or the $L_2$-projection transfer process. This is not  completely surprising, as the WF transfer process is only guaranteed to be asymptotically conservative, not exactly conservative. As we mentioned previously, conservation errors in the WF transfer process can be introduced during any of its three steps: synchronization, spline-reconstruction, or $L_2$-projection. However, it was not immediately clear to us \emph{a priori} which step was responsible for the largest conservation errors.
Therefore, we investigated each step of the transfer process by integrating over each element in the domain after each step and comparing with our \emph{source} mass prior to any operations. From this investigation, we determined that virtually no error is introduced by synchronization, a small amount of error is introduced by the $L_2$-projection (we can see the magnitude of the error introduced by the $L_2$-projection from Figure~\ref{fig:mass_conservation}), and the vast majority of the error is introduced when reconstructing/interpolating the solution using the WF splines. In what follows, we discuss two optional strategies for reducing the latter two sources of conservation error. We note that, regardless of whether these strategies are employed, the original WF transfer method remains asymptotically conservative. 

\subsubsection{Reducing Projection Error} \label{projection_error}

In~\cite{larose2025spline}, we introduced the idea of using adaptive quadrature procedures to improve the accuracy of $L_2$-projections on simplex elements. We showed both the impact of $h$-adaptation (splitting the quadrature domain into smaller subdomains), $p$-adaptation (increasing the strength of the quadrature rule by increasing the number of quadrature points), and hybrid $hp$-adaptation strategies. However, during our previous study, the quadrature process was not fully adaptive: we used a fixed number of subdivisions or quadrature points based on preliminary testing, and these parameters were not adjusted based on local estimates of the quadrature error. For the present updated study, we recursively subdivided the elements, using an edge-midpoint approach like that presented in~\cite{liu1996quality}, and applied a 56-point quadrature rule to each subdivision until the difference between consecutive integration approximations fell below a predefined tolerance. This was accomplished by setting a tolerance on the integrals of our $L_2$-projection and recursively $h$-adapting (subdividing) each element until the change in the integrals met the specified tolerance. The process was concluded once the integration tolerance ($1.0\times10^{-12}$) was met, or a maximum  of four subdivisions was performed.

\subsubsection{Reducing Spline Interpolation Error} \label{spline_error}

The spline reconstruction/interpolation error can be reduced by performing a global $L_2$-projection of the discontinuous solution on the source grid to the spline representation on the source grid---effectively replacing the need for the synchronization process  and the geometric computation of coefficients (see the Appendix). This operation requires solving a linear system whose size is proportional to the following three factors: the number of macrotetrahedra in the mesh, the number of subtetrahedra per macrotetrahedra (12), and the number of $\mathcal{P}_3$ degrees of freedom (DoFs) associated with each subtetrahedra (20). Note that some DoFs are duplicated, as they are shared by multiple subtetrahedra. During this process, we still follow the standard WF procedure but now require a domain-wide global mapping that will select the appropriate coefficients for each subtetrahedra. For this purpose, we imported a WF-refined mesh and allowed \emph{GMSH} to endow each element with a global node numbering using $\mathcal{P}_3$ nodes. We then constructed a local map shown in the first and second columns of Table~\ref{tab:Mappings}, which we used to select the appropriate coefficients from the global data returned by \emph{GMSH}. Thereafter, the global $L_2$-projection system was assembled, and solved for the spline coefficients.

\subsubsection{Combined Results and Final Perspectives}
The combination of techniques from Sections~\ref{projection_error} and \ref{spline_error} greatly reduce the  conservation error of the WF transfer as shown by the green dotted line in Fig. ~\ref{fig:mass_conservation}. For example, on the coarsest set of grids, the total conservation error was reduced from $5.152754\times10^{-3}$ to $7.758091\times10^{-9}$ for the Gaussian function (see Table~\ref{tab:mass_results_grid1to3}). We see that the effects of our new techniques shift the conservation error by roughly six orders of magnitude, thus giving us faster asymptotic convergence with significantly less resolution required. 

\begin{table}[h!]
    \begin{center}
    \begin{tabular}{|p{1cm}|p{1.8cm}|p{2.5cm}|p{2.5cm}|p{2.5cm}|}
    \hline
    \multicolumn{5}{|c|}{ Gaussian WF Transfer}\\
    \hline
    Grid & Step & Geometric ($\mathcal{C}^1$) & Geometric ($\mathcal{C}^1$) \& Adaptive Quad. & Global Coeff. \& Adaptive Quad. ($\mathcal{C}^0$)\\
    \hline
    \hline
    \multirow{4}{5em}{Grid 1} &Sync. & $4.163336\times10^{-17}$ & $4.163336\times10^{-17}$ & N/A \\
    \cline{2-5}
    &Spline-Rep. & $5.200065\times10^{-3}$ & $5.200065\times10^{-3}$ & $1.387779\times10^{-17}$\\
    \cline{2-5}
    &$L_2$-proj. & $4.731106\times10^{-5}$ & $1.372977\times10^{-10}$ &  $7.758091\times10^{-9}$\\
    \cline{2-5}
    &Total & $5.152754\times10^{-3}$ & $5.200065\times10^{-3}$  & $7.758091\times10^{-9}$\\
    \hline
    \hline
    \multirow{4}{5em}{Grid 2}& Sync. & $7.632783\times10^{-17}$ & $7.632783\times10^{-17}$ & N/A \\
    \cline{2-5}
    &Spline-Rep. & $1.046923\times10^{-3}$ & $1.046923\times10^{-3}$ & $8.049117\times10^{-16}$\\
    \cline{2-5}
    &$L_2$-proj. & $6.024885\times10^{-7}$ & $1.492484\times10^{-10}$ &  $1.167108\times10^{-9}$\\
    \cline{2-5}
    &Total & $1.047525\times10^{-3}$ & $1.046923\times10^{-3}$  & $1.167107\times10^{-9}$\\
    \hline
    \hline
    \multirow{4}{5em}{Grid 3} & Sync. & $5.551115\times10^{-17}$ & $5.551115\times10^{-17}$ & N/A \\
    \cline{2-5}
    & Spline-Rep. & $2.597638\times10^{-4}$ & $2.597638\times10^{-4}$ & $9.381385\times10^{-15}$\\
    \cline{2-5}
    & $L_2$-proj. & $1.993326\times10^{-8}$ & $3.395055\times10^{-12}$ &  $2.567850\times10^{-10}$\\
    \cline{2-5}
    & Total & $2.597439\times10^{-4}$ & $2.597638\times10^{-4}$  & $2.567756\times10^{-10}$\\
    \hline
    \end{tabular}
    \end{center}
    \caption{A comparison of the mass error for the standard WF transfer, WF transfer with adaptive quadrature, and WF transfer with global $L_2$-projection and adaptive quadrature on grids 1-3.}
    \label{tab:mass_results_grid1to3}
\end{table}

Of course, there are trade-offs to performing a global $L_2$-projection, as it will reduce the continuity of the solution from $\mathcal{C}^1$ to $\mathcal{C}^0$, which will result in diminished visualization and solution gradient accuracy. It may also require solving a large linear system for the coefficients. We also note that the proposed adaptive quadrature procedure can become expensive, especially if machine-zero accuracy of the projection is required.

It is our recommendation to consider the modularity of this approach: one can use the conservation properties of the $\mathcal{C}^0$ global $L_2$-projection to run and perform calculations, and use the geometric $\mathcal{C}^1$ WF reconstruction to visualize the solution and its gradient more accurately. One may also adjust the maximum number of refinement levels in the adaptive quadrature process in order to manage computational cost. One's choices depend on the needs of one's particular application.

\begin{table}[h!]
    \begin{center}
    \begin{tabular}{|c|c|c|c|c|c|c|c|c|c|c|c|c|c|}
    \hline
    \multicolumn{14}{|c|}{Element Mappings}\\
    \hline
    $\xi$ & GMSH & $T^{\alpha = 1}$ & $T^{2}$ & $T^{3}$ & $T^{4}$ & $T^{5}$ & $T^{6}$ & $T^{7}$ & $T^{8}$ & $T^{ 9}$ & $T^{10}$ & $T^{ 11}$ & $T^{12}$\\
    \hline
    3000 & 1 & 1 & 2 & 3 & 2 & 4 & 3 & 4 & 1 & 3 & 2 & 1 & 4\\
    \hline
    0300 & 2 & 2 & 3 & 1 & 4 & 3 & 2 & 1 & 3 & 4 & 1 & 4 & 2\\
    \hline
    0030 & 3 & 5 & 5 & 5 & 12 & 12 & 12 & 19 & 19 & 19 & 26 & 26 & 26\\
    \hline
    0003 & 4 & 91 & 91 & 91 & 91 & 91 & 91 & 91 & 91 & 91 & 91 & 91 & 91\\
    \hline
    2100 & 5 & 33 & 35 & 39 & 41 & 38 & 36 & 43 & 40 & 37 & 34 & 44 & 42\\
    \hline
    1200 & 6 & 34 & 36 & 40 & 42 & 37 &35 & 44 & 39 & 38 & 33 & 43 & 41\\
    \hline
    0210 & 7 & 9 & 10 & 11 & 16 & 17 & 18 & 23 & 24 & 25 & 31 & 32 & 30\\
    \hline
    0120 & 8 & 6 & 7 & 8 & 13 & 14 & 15 & 20 & 21 & 22 & 28 & 29 & 27\\
    \hline
    1020 & 9 & 8 & 6 & 7 & 15 & 13 & 14 & 22 & 20 & 21 & 27 & 28 & 29\\
    \hline
    2010 & 10 & 11 & 9 & 10 & 18 & 16 & 17 & 25 & 23 & 24 & 30 & 31 & 32\\
    \hline
    1002 & 11 & 47 & 45 & 46 & 45 & 51 & 46 & 51 & 47 & 46 & 45 & 47 & 51\\
    \hline
    2001 & 12 & 50 & 48 & 49 & 48 & 52 & 49 & 52 & 50 & 49 & 48 & 50 & 52\\
    \hline
    0012 & 13 & 53 & 53 & 53 & 55 & 55 & 55 & 57 & 57 & 57 & 59 & 59 & 59\\
    \hline
    0021 & 14 & 54 & 54 & 54 & 56 & 56 & 56 & 58 & 58 & 58 & 60 & 60 & 60\\
    \hline
    0102 & 15 & 45 & 46 & 47 & 51 & 46 & 45 & 47 & 46 & 51 & 47 & 51 & 45\\
    \hline
    0201 & 16 & 48 & 49 & 50 & 52 & 49 & 48 & 50 & 49 & 52 & 50 & 52 & 48\\
    \hline
    1110 & 17 & 61 & 62 & 63 & 64 & 65 & 66 & 67 & 68 & 69 & 71 & 72 & 70\\
    \hline
    1101 & 18 & 73 & 77 & 78 & 80 & 79 & 77 & 87 & 78 & 79 & 73 & 87 & 80\\
    \hline
    1011 & 19 & 76 & 74 & 75 & 81 & 83 & 82 & 84 & 85 & 86 & 89 & 88 & 90\\
    \hline
    0111 & 20 & 74 & 75 & 76 & 83 & 82 & 81 & 85 & 86 & 84 & 88 & 90 & 89\\
    \hline
    \end{tabular}
    \end{center}
    \caption{Mapping of the B-coefficients from their local domain ordering, to the order in which \emph{GMSH} outputs $\mathcal{P}_3$ nodes, for the subtetrahedra in a macroelement.}
    \label{tab:Mappings}
\end{table}

\subsection{Visualization}
\label{sec:Visualization}

In our previous work~\cite{larose2025spline}, we showed that spline-based solution  transfer greatly improves visualization of the solution in 2D. In this section, we show that this desirable property extends to 3D. 

In our study of the visualization properties, we present comparisons of smoothed and un-smoothed data for the functions $u_1$ and $u_2$.  In each case, the un-smoothed data was created by computing the $L_2$-projection of an analytical function on source mesh number 2. The polynomial degree for this projection was $\polyorder = 1$. Thereafter, the smoothed data was generated by synchronization and interpolation with WF splines (of degree 3).  In order to visualize the un-smoothed solution, we sampled it onto a rectilinear grid of $33\times33\times33$ Gauss  quadrature points, for a total of 35,937 sampling points. In addition, in order to more accurately represent the cubic-smoothed data, we sampled it onto a rectilinear grid of  $97\times97\times97$ Gauss quadrature points, for a total of 912,673 sampling points. Lastly, the original analytical function, i.e.~the exact solution, was sampled on the latter rectilinear grid. After sampling was completed, visualization was performed using the open-source, post-processing visualization tool, \emph{ParaView}.

Figure~\ref{fig:gauss_viz} shows the Gaussian function $u_1$ and its gradient magnitude. The exact solution appears in the top row, the $L_2$-projection appears in the middle row, and the WF interpolation appears on the bottom row. What is most striking from this figure is that the WF gradient magnitude (see the right side of each row) better resolves the inner, dark-red ring thus capturing the gradient far better than the $L_2$-projection---which echoes what we saw from the order of accuracy study. The solution (see the left side of each row) has little noticeable variation across all rows, though we know thanks to our order of accuracy study, that the WF process is slightly more accurate, even for $\polyorder=1$ data. 
\begin{figure}
    \centering
    \includegraphics[width=1.0\linewidth]{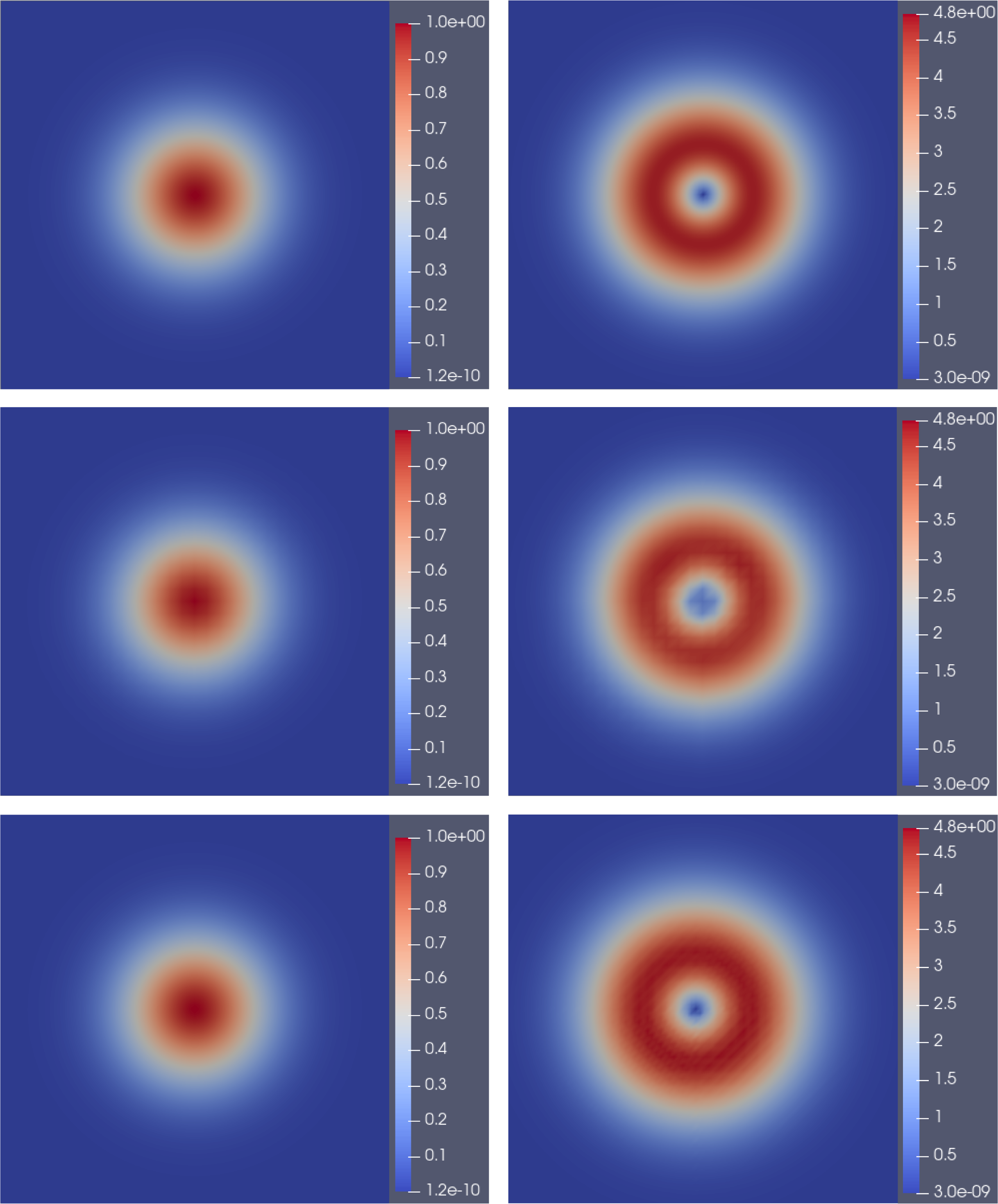}
    \caption{Visualization of the Gaussian function, $u_1$, and its gradient magnitude over the plane $x=0.5$, $y,z\in[0,1]$. This data is presented as follows: top left, the exact solution $u_1$; top right, the gradient magnitude of the exact solution $u_1$; middle left, the $L_2$-projection of $u_1$; middle right, the gradient magnitude of the $L_2$-projection of $u_1$; bottom left, the WF interpolant of $u_1$; and bottom right, the gradient magnitude of the WF interpolant of $u_1$.}
    \label{fig:gauss_viz}
\end{figure}

Figures~\ref{fig:tanh_full_viz} and \ref{fig:tanh_planes_viz} show the hyperbolic tangent function $u_2$ and its gradient magnitude.  Figure~\ref{fig:tanh_planes_viz} very clearly illustrates the improvement of the WF process over $L_2$-projection. Not only does the gradient magnitude look smoother and better resolved, but the solution is also noticeably better resolved along the sharp, diamond-shaped boundary between the red and blue regions.

\begin{figure}
    \centering
    \includegraphics[width=1.0\linewidth]{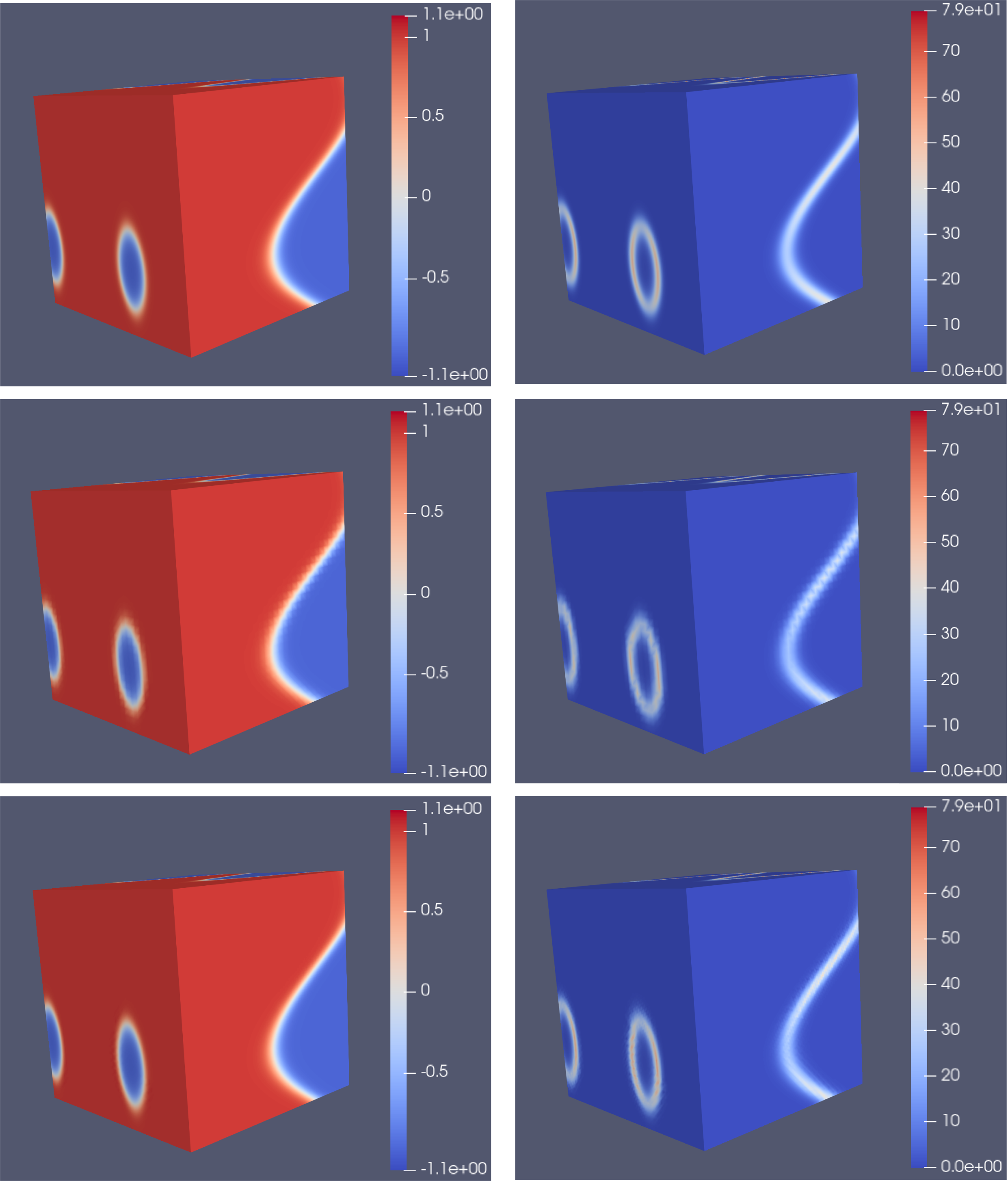}
    \caption{Visualization of the hyperbolic tangent function, $u_2$, and its gradient magnitude over the whole domain $x,y,z \in [0,1]$. This data is presented as follows: top left, the exact solution $u_2$; top right, the gradient magnitude of the exact solution $u_2$; middle left, the $L_2$-projection of $u_2$; middle right, the gradient magnitude of the $L_2$-projection of $u_2$; bottom left, the WF interpolant of $u_2$; and bottom right, the gradient magnitude of the WF interpolant of $u_2$.}
    \label{fig:tanh_full_viz}
\end{figure}
\begin{figure}
    \centering
    \includegraphics[width=1.0\linewidth]{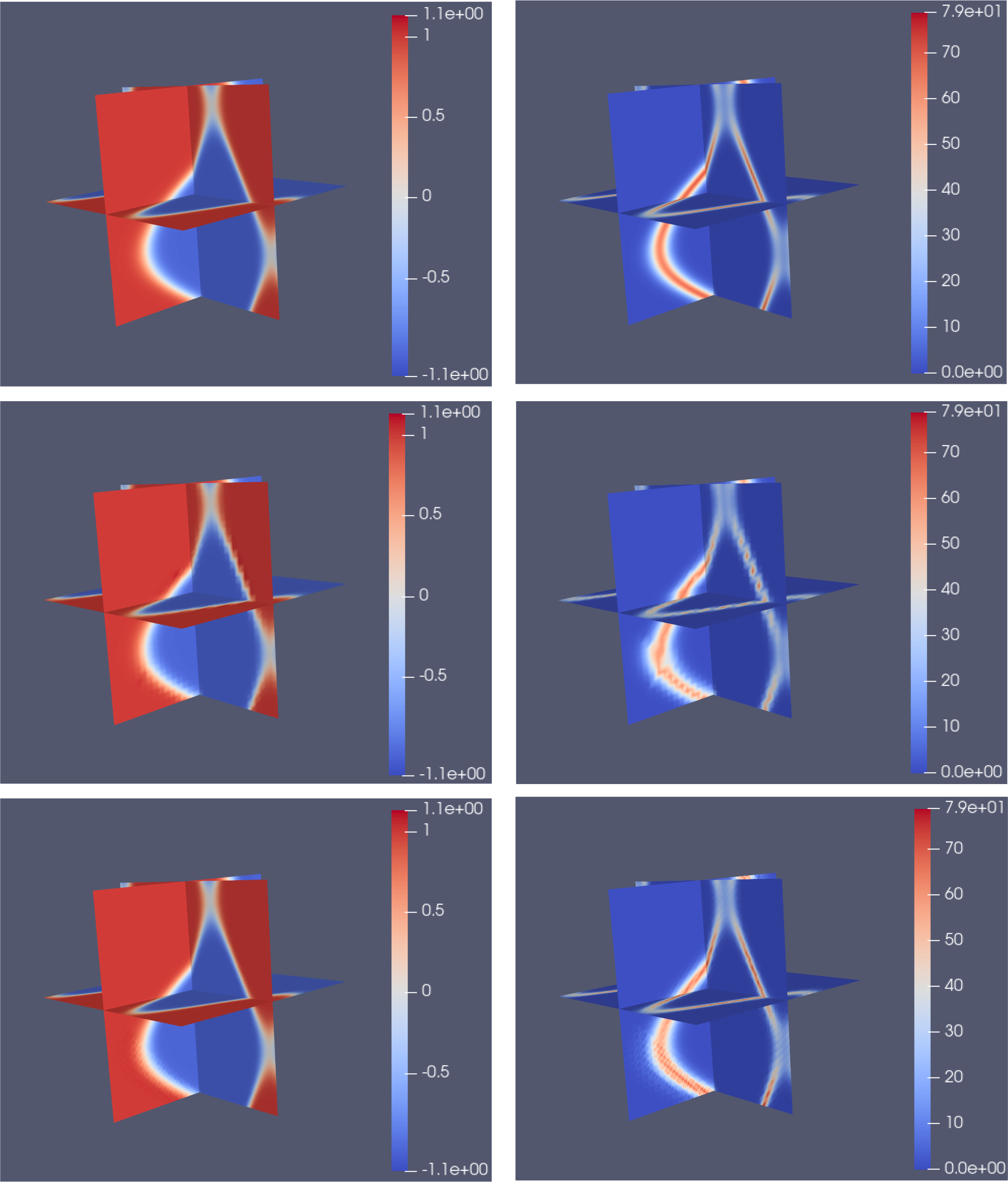}
    \caption{Visualization of the hyperbolic tangent function, $u_2$, and its gradient magnitude on planes $\{x=0.5,\, y,z \in [0,1]\}$, $\{y=0.5,\,x,z=[0,1]\}$, $\{z=0.5,\,x,y=[0,1]\}$. This data is presented as follows: top left, the exact solution $u_2$; top right, the gradient magnitude of the exact solution $u_2$; middle left, the $L_2$-projection of $u_2$; middle right, the gradient magnitude of the $L_2$-projection of $u_2$; bottom left, the WF interpolant of $u_2$; and bottom right, the gradient magnitude of the WF interpolant of $u_2$.}
    \label{fig:tanh_planes_viz}
\end{figure}

The results above are unsurprising, as the gradient (and its magnitude) are discontinuous for the $L_2$-projection, whereas they are continuous for the WF approach. However, we note that the discontinuities of the gradient magnitude for the $L_2$ projection are not easy to directly observe. This is due to the inherent limitations of modern visualization software. In 2D, directly visualizing the discontinuities is actually possible, as the gradient magnitude can be plotted in a 3D format: i.e.~a surface can be plotted with coordinates $x$ and $y$ that correspond to the physical locations of points in the spatial domain, along with a coordinate $z$ (the height of the surface) that corresponds to the gradient magnitude. Figures 15 and 16 of~\cite{larose2025spline} illustrate this approach. Naturally, this approach cannot be taken with true 3D data, as the coordinates $x$, $y$, and $z$ are required to represent the physical location in the spatial domain, and a fourth coordinate $w$ is required to represent the gradient magnitude. We are unaware of any 4D visualization software which could effectively illustrate the resulting surface, and visualize its discontinuities (or lack thereof). Of course, this is theoretically possible, as one could construct a video where time would be our fourth dimension. Here, we would expect that the gradient magnitude of the WF process would look smoother and more continuous in time between frames, while the gradient magnitude of the $L_2$-projection would be choppy with distinctive jumps between frames. Regardless, even without this more sophisticated approach, we believe our present study still shows the qualitative improvement of visualizing the solution and its derivatives with the WF process compared to $L_2$-projection.   

\pagebreak
\clearpage

\section{Conclusion}
\label{sec;Conclusion}
This work presents, to the best of our knowledge, the most comprehensive and explicit implementation details for the Worsey-Farin spline space to date. We present diagrams and procedures for how to perform the WF-split, and in the Appendix, provide all B-coefficients with equations included and multiple views of each subtetrahedra with its corresponding domain points. 

We also demonstrated the capabilities of the WF space in a solution transfer method. Our method is capable of enforcing conservation (asymptotically); it provides a smooth, continuous surrogate solution; and it can accurately reconstruct the solution and its derivatives for transfer between meshes. For our transfer method, we derived an error estimate which predicts an order of accuracy of $\polyorder+1$ when $1 \leq \polyorder \leq 3$. In addition, numerical experiments were conducted to test the conservation, accuracy, and visualization properties of the WF solution transfer method.  

The order of accuracy experiments showed that the WF transfer method converged at a rate of second order for $\polyorder=1$ and third order for $\polyorder=2$, for the transcendental functions tested. Additionally, the derivatives of the solution converged at one order lower than the solution, i.e.~first order for $\polyorder=1$ and second order for $\polyorder=2$. WF transfer outperformed the standard linear interpolation procedure significantly for both the functions tested. These results reinforce the theoretical error estimate derived in this work, and demonstrate the viability of WF-based approaches for solution transfer applications. As a bonus feature, the visualization study qualitatively showed the improved ability of the smooth WF solution to visualize derivatives compared to the discontinuous visualization provided by $L_2$-projection. 

We also demonstrated an adaptive quadrature routine on tetrahedra, which we used to boost the conservation properties of the WF transfer process on coarser grids. When used in combination with a global $L_2$-projection which solves for all B-coefficients on the grid, we were able to dramatically improve the conservation of the WF transfer method. Based on this work, we recommend exploiting the modular construction of the WF method: in particular, the user can choose $\mathcal{C}^1$-continuity for better visualization properties by computing the spline coefficients using the algebraic formulas of Section~\ref{sec;Implementation}, or choose $\mathcal{C}^{0}$-continuity for better conservation properties by computing the spline coefficients using the global $L_2$-projection procedure of Section~\ref{spline_error}. More generally speaking, the main strength of the proposed WF-based solution transfer method is its flexibility, which makes it potentially competitive with multiple transfer methods across many different settings.

\pagebreak
\clearpage

\section*{CRediT authorship contribution statement}

\textbf{L. Larose}: Writing – original draft, Writing – review and editing, Visualization, Methodology, Formal analysis. \textbf{D.M. Williams}: Writing – original draft, Writing – review and editing, Conceptualization, Formal
analysis, Supervision, Funding acquisition.

\section*{Software Availability}

The solution transfer algorithms were implemented as an extension of the JENRE$^{\text{\textregistered}}$  Multiphysics Framework~\cite{Cor19_SCITECH}, which is United States government-owned software developed by the Naval Research Laboratory with other collaborating institutions. This software is not available for public use or dissemination.

\section*{Declaration of Competing Interests}

The authors declare that they have no known competing financial interests or personal relationships that could have appeared to influence the work reported in this paper.

\section*{Funding}

This research received funding from the United States Naval Research Laboratory (NRL) under grant number N00173-22-2-C008. In turn, the NRL grant itself was funded by Steven Martens, Program Officer for the Power, Propulsion and Thermal Management Program, Code 35, in the United States Office of Naval Research.

\pagebreak

\appendix

\section{Worsey-Farin Implementation Details}
\label{sec;Appendix}
Let us introduce the spline coefficients.
Towards this end, we use $\sigma$ to denote barycentric coordinates on 2D triangular faces and $\kappa$ to denote barycentric coordinates on 3D tetrahedra. In particular, we let 
\begin{align}
\left\{\sigma_i^{j}\right\}_{i=1,\ldots, 3} = \left(\sigma_1^{j}, \sigma_2^{j}, \sigma_3^{j} \right),
\end{align}
denote a vector of barycentric coordinates for the $j$th split point taken with respect to the triangular face on which that split point lies. Similarly, 
\begin{align}
\left\{\kappa_i^{91}\right\}_{i=1,\ldots, 4} = \left(\kappa_1^{91}, \kappa_2^{91}, \kappa_3^{91}, \kappa_4^{91} \right),
\end{align}
denotes a vector of the barycentric coordinates for the incenter point, domain point 91, taken with respect to the macrotetrahedron. In addition, $u_e$, refers to the vector which points from the edge midpoint $\eta_e$ to the opposite vertex, where edge $e$ is formed by vertex points $\vertex^i$ and $\vertex^{i+1}$, and the opposite vertex is $\vertex^{m}$, where $m = i+2$ modulo 3 --- with the exception that anything that returns 0 is mapped to 3.

Here, we present the coefficients positioned at their corresponding domain points for each of the four subtetrahedra in exploded views and `shell' views in Figures~\ref{fig:tet_123_exploded}---\ref{fig:tet_214_shells}. Additionally, we explicitly state the nodes and coefficients which define the WF splines.

\vspace{12pt}

\begin{figure}[h!]
    \centering
    \includegraphics[width=1.0\linewidth]{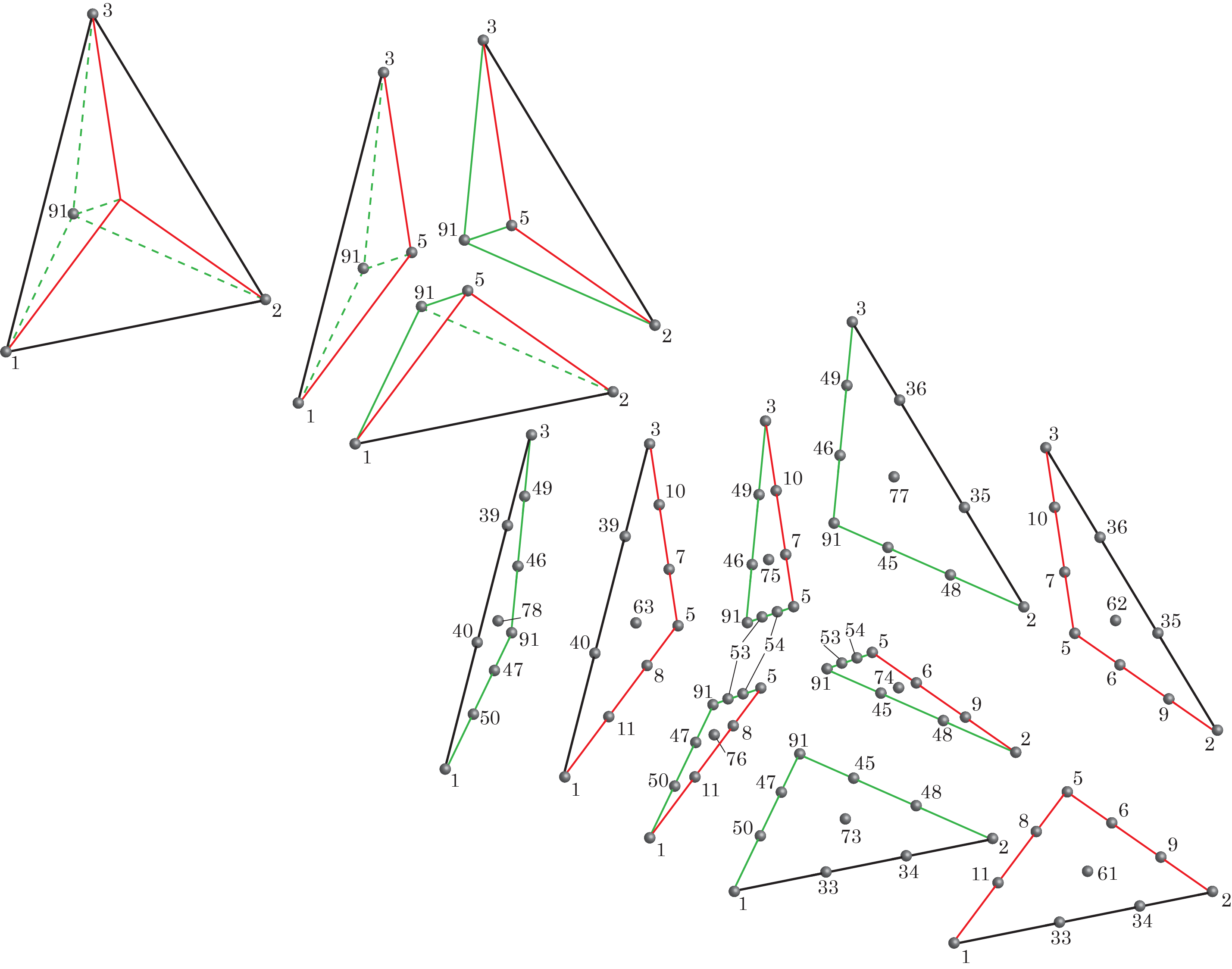}
    \caption{An exploded view of the subtetrahedron and subsequent subtetrahedra formed by vertices $\vertex^1,\vertex^2,\vertex^3$.}
    \label{fig:tet_123_exploded}
\end{figure}
\begin{figure}[h!]
    \centering
    \includegraphics[width=1.0\linewidth]{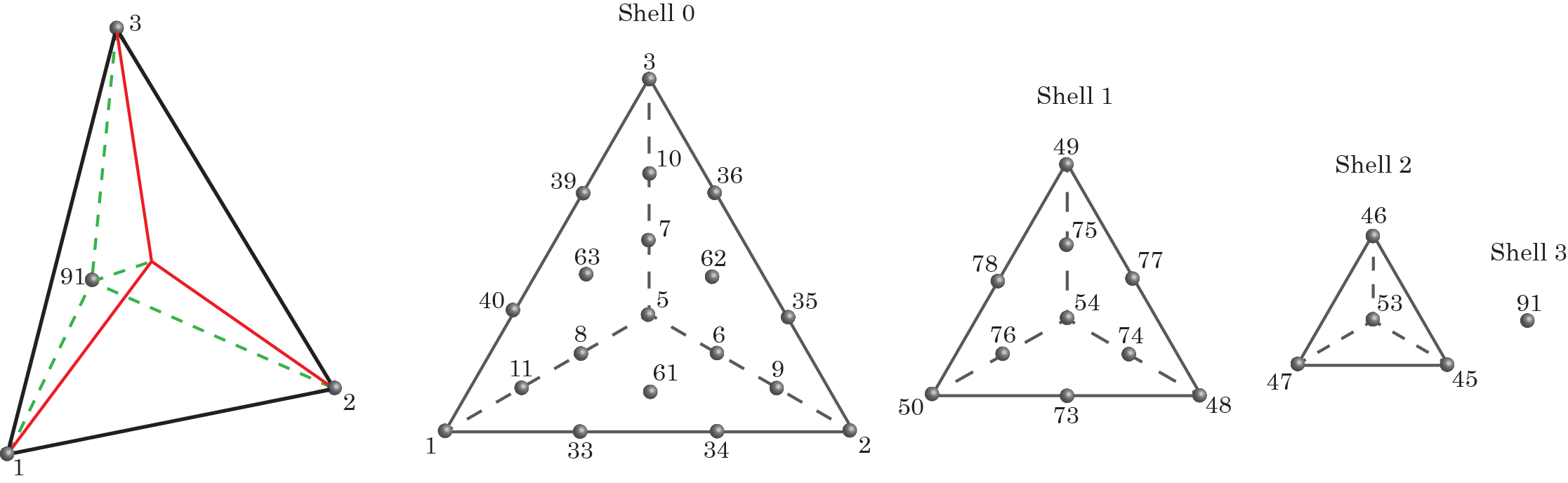}
    \caption{Reorganizing the coefficients of the subtetrahedron and subsequent subtetrahedra formed by vertices $\vertex^1,\vertex^2,\vertex^3$ into a `shell' format.}
    \label{fig:tet_123_shells}
\end{figure}

\noindent Nodes:
\begin{align*}
    &\vertex^5 =\sigma_{1}^{5}\vertex^{1}+\sigma_{2}^{5}\vertex^{2}+\sigma_{3}^{5}\vertex^{3}, \quad \vertex^{91} = \kappa_{1}^{91}\vertex^{1}+\kappa_{2}^{91}\vertex^{2}+\kappa_{3}^{91}\vertex^{4}+\kappa_{4}^{91}\vertex^{3}.
\end{align*}

\noindent Coefficients for the front subtetrahedron (123):
{\allowdisplaybreaks
\begin{align*}
\text{Shell 0:}\\
    &c_{1}  f(\vertex^{1}), \quad c_{2} = f(\vertex^{2}), \quad
    c_{3} = f(\vertex^{3}),\\[1.0ex]
    &c_{5} = \sigma_{1}^{5}c_{8}+\sigma_{2}^{5}c_{6}+\sigma_{3}^{5}c_{7}, \quad
    c_{6} = \sigma_{1}^{5}c_{61}+\sigma_{2}^{5}c_{9}+\sigma_{3}^{5}c_{62},\\[1.0ex]
    &c_{7} = \sigma_{1}^{5}c_{63}+\sigma_{2}^{5}c_{62}+\sigma_{3}^{5}c_{10}, \quad c_{8} = \sigma_{1}^{5}c_{11}+\sigma_{2}^{5}c_{61}+\sigma_{3}^{5}c_{63},\\[1.0ex]
    &c_{9} = \frac{D f(\vertex^{2})\cdot \overrightarrow{\vertex^{2}\vertex^{5}}}{3}+f(\vertex^{2}), \quad
    c_{10} = \frac{D f(\vertex^{3})\cdot \overrightarrow{\vertex^{3}\vertex^{5}}}{3}+f(\vertex^{3}), \quad c_{11} = \frac{D f(\vertex^{1})\cdot \overrightarrow{\vertex^{1}\vertex^{5}}}{3}+f(\vertex^{1}),\\
    &c_{33} = \frac{D f(\vertex^{1})\cdot \overrightarrow{\vertex^{1}\vertex^{2}}}{3}+f(\vertex^{1}), \quad c_{34} = \frac{D f(\vertex^{2})\cdot \overrightarrow{\vertex^{2}\vertex^{1}}}{3}+f(\vertex^{2}), \quad c_{35} = \frac{D f(\vertex^{2})\cdot \overrightarrow{\vertex^{2}\vertex^{3}}}{3}+f(\vertex^{2}),\\[1.0ex]
    &c_{36} = \frac{D f(\vertex^{3})\cdot \overrightarrow{\vertex^{3}\vertex^{2}}}{3}+f(\vertex^{3}), \quad
    c_{39} = \frac{D f(\vertex^{3})\cdot \overrightarrow{\vertex^{3}\vertex^{1}}}{3}+f(\vertex^{3}), \quad c_{40} = \frac{D f(\vertex^{1})\cdot \overrightarrow{\vertex^{1}\vertex^{3}}}{3}+f(\vertex^{1}), \\
    &c_{61}  = \frac{4}{6a_{3}}D_{u_{e}}f(\eta_{e})-\frac{1}{2}(c_{11}+c_{9})-\frac{a_{1}}{2a_{3}}(c_{1}+2c_{33}+c_{34})-\frac{a_{2}}{2a_{3}}(c_{2}+2c_{34}+c_{33}),\\
    &\text{where $a = (-1/2,-1/2,1)$ and $\eta_{e} = (1/2,1/2,0)$},\\[1.0ex]
    &c_{62}  = \frac{4}{6a_{3}}D_{u_{e}}f(\eta_{e})-\frac{1}{2}(c_{9}+c_{10})-\frac{a_{1}}{2a_{3}}(c_{2}+2c_{35}+c_{36})-\frac{a_{2}}{2a_{3}}(c_{3}+2c_{36}+c_{35}),\\
    &\text{where $a = (-1/2,-1/2,1)$ and $\eta_{e} = (1/2,1/2,0)$},\\[1.0ex]
    &c_{63}  = \frac{4}{6a_{3}}D_{u_{e}}f(\eta_{e})-\frac{1}{2}(c_{10}+c_{11})-\frac{a_{1}}{2a_{3}}(c_{3}+2c_{39}+c_{40})-\frac{a_{2}}{2a_{3}}(c_{1}+2c_{40}+c_{39}),\\
    &\text{where $a = (-1/2,-1/2,1)$ and $\eta_{e} = (1/2,1/2,0)$}.\\[1.0ex]
    \text{Shell 1:}\\
    &c_{54}  = \sigma_{1}^{5} c_{76}+\sigma_{2}^{5} c_{74}+\sigma_{3}^{5} c_{75},\quad c_{74} = \sigma_{1}^5 c_{73}+\sigma_{2}^5 c_{48}+\sigma_{3}^5 c_{77},\\
    &c_{75}  = \sigma_{1}^5 c_{78}+\sigma_{2}^5 c_{77}+\sigma_{3}^5 c_{49}, \quad c_{76} = \sigma_{1}^5 c_{50}+\sigma_{2}^5 c_{73}+\sigma_{3}^5 c_{78},\\
    &c_{48} = \frac{D f(\vertex^{2})\cdot \overrightarrow{\vertex^{2}\vertex^{91}}}{3}+f(\vertex^{2}), \quad
    c_{49} = \frac{D f(\vertex^{3})\cdot \overrightarrow{\vertex^{3}\vertex^{91}}}{3}+f(\vertex^{3}), \quad c_{50} = \frac{D f(\vertex^{1})\cdot \overrightarrow{\vertex^{1}\vertex^{91}}}{3}+f(\vertex^{1}),\\[1.0ex]
    &c_{73}  = \frac{4}{6a_{3}}D_{u_{e}}f(\eta_{e})-\frac{1}{2}(c_{50}+c_{48})-\frac{a_{1}}{2a_{3}}(c_{1}+2c_{33}+c_{34})-\frac{a_{2}}{2a_{3}}(c_{2}+2c_{34}+c_{33}),\\
    &\text{where $a = (-1/2,-1/2,1)$ and $\eta_{e} = (1/2,1/2,0)$},\\
    &c_{77}  = \frac{4}{6a_{3}}D_{u_{e}}f(\eta_{e})-\frac{1}{2}(c_{48}+c_{49})-\frac{a_{1}}{2a_{3}}(c_{2}+2c_{35}+c_{36})-\frac{a_{2}}{2a_{3}}(c_{3}+2c_{36}+c_{35}),\\
    &\text{where $a = (-1/2,-1/2,1)$ and $\eta_{e} = (1/2,1/2,0)$},\\
    &c_{78}  = \frac{4}{6a_{3}}D_{u_{e}}f(\eta_{e})-\frac{1}{2}(c_{49}+c_{50})-\frac{a_{1}}{2a_{3}}(c_{3}+2c_{39}+c_{40})-\frac{a_{2}}{2a_{3}}(c_{1}+2c_{40}+c_{39}),\\
    &\text{where $a = (-1/2,-1/2,1)$ and $\eta_{e} = (1/2,1/2,0)$}.\\
    \text{Shell 2:}\\
    &c_{53} = \sigma_{1}^{5} c_{47}+\sigma_{2}^{5} c_{45}+\sigma_{3}^{5} c_{46},\\
    &c_{45}  = \kappa_{1}^{91} c_{73}+\kappa_{2}^{91} c_{48}+\kappa_{3}^{91} c_{80}+\kappa_{4}^{91} c_{77}, \quad c_{46} = \kappa_{1}^{91} c_{78}+\kappa_{2}^{91} c_{77}+\kappa_{3}^{91} c_{79}+\kappa_{4}^{91} c_{49},\\
    &c_{47}  = \kappa_{1}^{91} c_{50}+\kappa_{2}^{91} c_{73}+\kappa_{3}^{91} c_{87}+\kappa_{4}^{91} c_{78}.\\
    \text{Shell 3:}\\
    &c_{91}  = \kappa_{1}^{91} c_{47}+\kappa_{2}^{91} c_{45}+\kappa_{3}^{91} c_{51}+\kappa_{4}^{91} c_{46}.\\
\end{align*}
}
\begin{figure}[h!]
    \centering
    \includegraphics[width=1.0\linewidth]{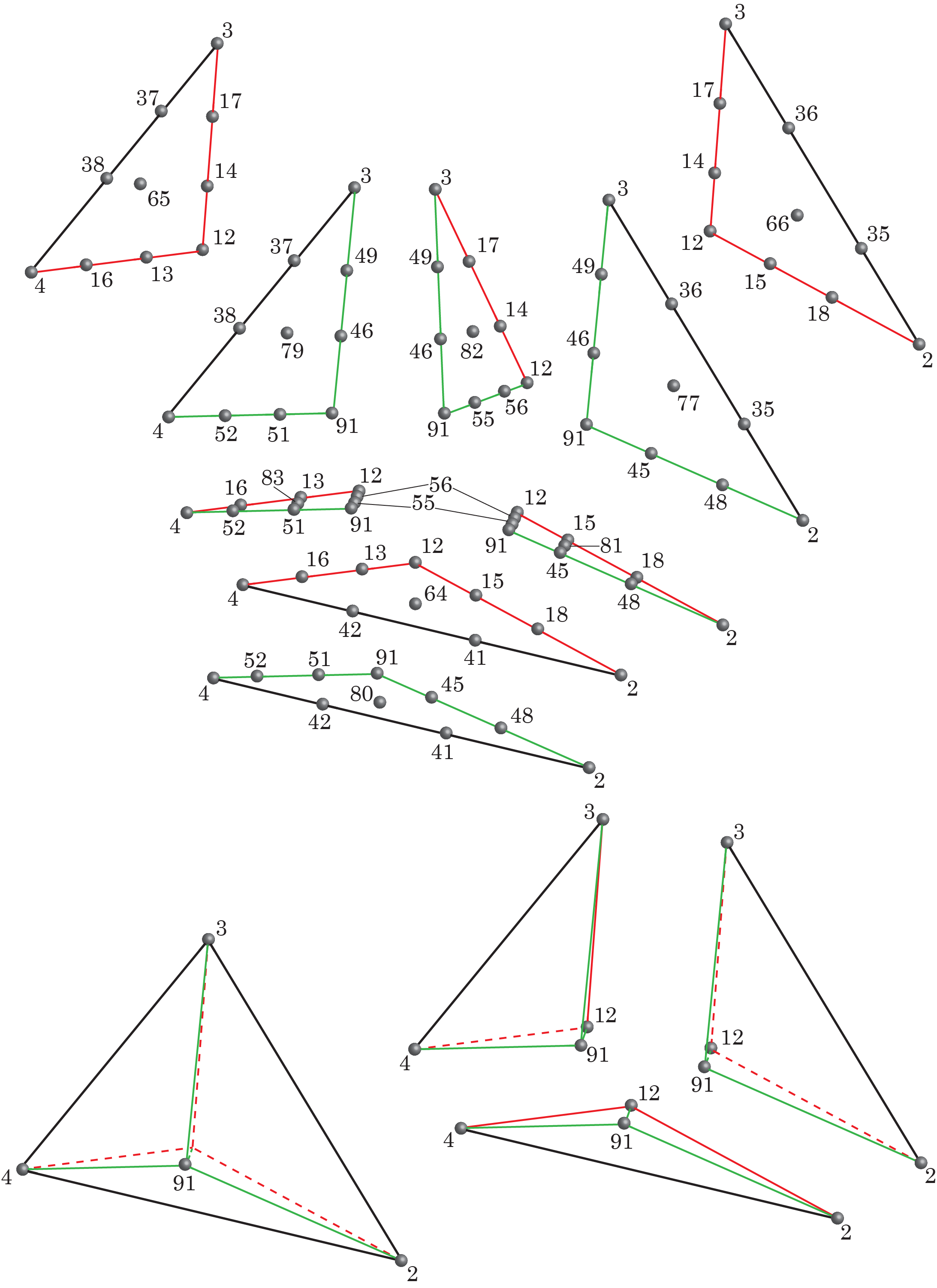}
    \caption{An exploded view of the subtetrahedron and subsequent subtetrahedra formed by vertices $\vertex^2,\vertex^4,\vertex^3$.}
    \label{fig:tet_243_exploded}
\end{figure}
\begin{figure}[h]
    \centering
    \includegraphics[width=1.0\linewidth]{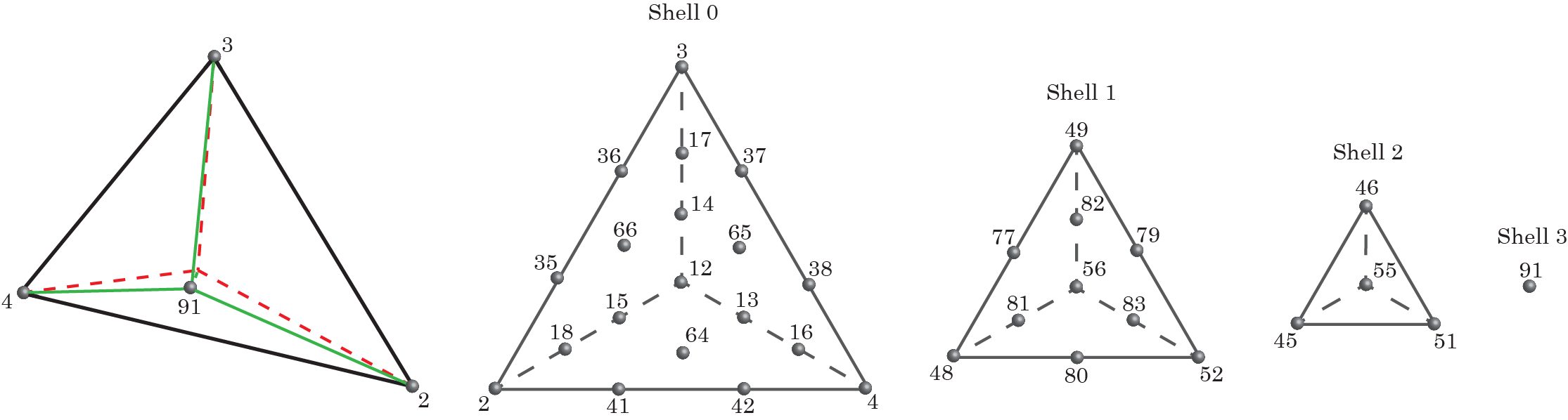}
    \caption{Reorganizing the coefficients of the subtetrahedron and subsequent subtetrahedra formed by vertices $\vertex^2,\vertex^4,\vertex^3$ into a `shell' format.}
    \label{fig:tet_243_shells}
\end{figure}

\noindent Coefficients for the back subtetrahedron (243):\\
\noindent Nodes:
\begin{align*}
    &\vertex^{12} =\sigma_{1}^{12}\vertex^{2}+\sigma_{2}^{12}\vertex^{4}+\sigma_{3}^{12}\vertex^{3}, \quad \vertex^{91} = \kappa_{1}^{91}\vertex^{1}+\kappa_{2}^{91}\vertex^{2}+\kappa_{3}^{91}\vertex^{4}+\kappa_{4}^{91}\vertex^{3}.
\end{align*}
{\allowdisplaybreaks
\begin{align*}
\text{Shell 0:}\\
    &c_{2} = f(\vertex^{2}), \quad c_{3} = f(\vertex^{3}), \quad c_{4} = f(\vertex^{4}),\\
    &c_{12} = \sigma_{1}^{12}c_{15}+\sigma_{2}^{12}c_{13}+\sigma_{3}^{12}c_{14}, \quad c_{13} = \sigma_{1}^{12}c_{64}+\sigma_{2}^{12}c_{16}+\sigma_{3}^{12}c_{65},\\
    &c_{14} = \sigma_{1}^{12}c_{66}+\sigma_{2}^{12}c_{65}+\sigma_{3}^{12}c_{17}, \quad c_{15} = \sigma_{1}^{12}c_{18}+\sigma_{2}^{12}c_{64}+\sigma_{3}^{12}c_{66},\\
    &c_{16} = \frac{D f(\vertex^{4})\cdot \overrightarrow{\vertex^{4}\vertex^{12}}}{3}+f(\vertex^{4}), \quad c_{17} = \frac{D f(\vertex^{3})\cdot \overrightarrow{\vertex^{3}\vertex^{12}}}{3}+f(\vertex^{3}), \quad c_{18} = \frac{D f(\vertex^{2})\cdot \overrightarrow{\vertex^{1}\vertex^{12}}}{3}+f(\vertex^{2}),\\
    &c_{35} = \frac{D f(\vertex^{2})\cdot \overrightarrow{\vertex^{2}\vertex^{3}}}{3}+f(\vertex^{2}), \quad c_{36} = \frac{D f(\vertex^{3})\cdot \overrightarrow{\vertex^{3}\vertex^{2}}}{3}+f(\vertex^{3}), \quad c_{37} = \frac{D f(\vertex^{3})\cdot \overrightarrow{\vertex^{3}\vertex^{4}}}{3}+f(\vertex^{3}),\\
    &c_{38} = \frac{D f(\vertex^{4})\cdot \overrightarrow{\vertex^{4}\vertex^{3}}}{3}+f(\vertex^{4}), \quad c_{41} = \frac{D f(\vertex^{2})\cdot \overrightarrow{\vertex^{2}\vertex^{4}}}{3}+f(\vertex^{2}), \quad c_{42} = \frac{D f(\vertex^{4})\cdot \overrightarrow{\vertex^{4}\vertex^{2}}}{3}+f(\vertex^{4}),\\ 
    &c_{64}  = \frac{4}{6a_{3}}D_{u_{e}}f(\eta_{e})-\frac{1}{2}(c_{18}+c_{16})-\frac{a_{1}}{2a_{3}}(c_{2}+2c_{41}+c_{42})-\frac{a_{2}}{2a_{3}}(c_{4}+2c_{42}+c_{41}),\\
    &\text{where $a = (-1/2,-1/2,1)$ and $\eta_{e} = (1/2,1/2,0)$},\\
    &c_{65}  = \frac{4}{6a_{3}}D_{u_{e}}f(\eta_{e})-\frac{1}{2}(c_{16}+c_{17})-\frac{a_{1}}{2a_{3}}(c_{4}+2c_{38}+c_{37})-\frac{a_{2}}{2a_{3}}(c_{3}+2c_{37}+c_{38}),\\
    &\text{where $a = (-1/2,-1/2,1)$ and $\eta_{e} = (1/2,1/2,0)$},\\
    &c_{66}  = \frac{4}{6a_{3}}D_{u_{e}}f(\eta_{e})-\frac{1}{2}(c_{17}+c_{18})-\frac{a_{1}}{2a_{3}}(c_{3}+2c_{36}+c_{35})-\frac{a_{2}}{2a_{3}}(c_{2}+2c_{35}+c_{36}),\\
    &\text{where $a = (-1/2,-1/2,1)$ and $\eta_{e} = (1/2,1/2,0)$}.\\
    \text{Shell 1:}\\
    &c_{56}  = \sigma_{1}^{12} c_{81}+\sigma_{2}^{12} c_{83}+\sigma_{3}^{12} c_{82}, \quad c_{81} = \sigma_{1}^{12} c_{48}+\sigma_{2}^{12} c_{80}+\sigma_{3}^{12} c_{77},\\
    &c_{82}  = \sigma_{1}^{12} c_{77}+\sigma_{2}^{12} c_{79}+\sigma_{3}^{12} c_{49}, \quad c_{83} = \sigma_{1}^{12} c_{80}+\sigma_{2}^{12} c_{52}+\sigma_{3}^{12} c_{79},\\
    &c_{48} = \frac{D f(\vertex^{2})\cdot \overrightarrow{\vertex^{2}\vertex^{91}}}{3}+f(\vertex^{2}), \quad c_{49} = \frac{D f(\vertex^{3})\cdot \overrightarrow{\vertex^{3}\vertex^{91}}}{3}+f(\vertex^{3}), \quad c_{52} = \frac{D f(\vertex^{4}),\cdot \overrightarrow{\vertex^{4}\vertex^{91}}}{3}+f(\vertex^{4}),\\
    &c_{77}  = \frac{4}{6a_{3}}D_{u_{e}}f(\eta_{e})-\frac{1}{2}(c_{49}+c_{48})-\frac{a_{1}}{2a_{3}}(c_{3}+2c_{36}+c_{35})-\frac{a_{2}}{2a_{3}}(c_{2}+2c_{35}+c_{36}),\\
    &\text{where $a = (-1/2,-1/2,1)$ and $\eta_{e} = (1/2,1/2,0)$},\\
    &c_{79}  = \frac{4}{6a_{3}}D_{u_{e}}f(\eta_{e})-\frac{1}{2}(c_{52}+c_{49})-\frac{a_{1}}{2a_{3}}(c_{4}+2c_{38}+c_{37})-\frac{a_{2}}{2a_{3}}(c_{3}+2c_{37}+c_{38}),\\
    &\text{where $a = (1,-1/2,-1/2)$ and $\eta_{e} = (0,1/2,1/2)$},\\
    &c_{80}  = \frac{4}{6a_{3}}D_{u_{e}}f(\eta_{e})-\frac{1}{2}(c_{48}+c_{52})-\frac{a_{1}}{2a_{3}}(c_{2}+2c_{41}+c_{42})-\frac{a_{2}}{2a_{3}}(c_{4}+2c_{42}+c_{41}),\\
    &\text{where $a = (-1/2,-1/2,1)$ and $\eta_{e} = (1/2,1/2,0)$}.\\
    \text{Shell 2:}\\
    &c_{55} = \sigma_{1}^{12} c_{45}+\sigma_{2}^{12} c_{51}+\sigma_{3}^{12} c_{46},\\
    &c_{45}  = \kappa_{1}^{91} c_{73}+\kappa_{2}^{91} c_{48}+\kappa_{3}^{91} c_{80}+\kappa_{4}^{91} c_{77}, \quad c_{46} = \kappa_{1}^{91} c_{78}+\kappa_{2}^{91} c_{77}+\kappa_{3}^{91} c_{79}+\kappa_{4}^{91} c_{49},\\
    &c_{51}  = \kappa_{1}^{91} c_{87}+\kappa_{2}^{91} c_{80}+\kappa_{3}^{91} c_{52}+\kappa_{4}^{91} c_{79}.\\
    \text{Shell 3:}\\
    &c_{91}  = \kappa_{1}^{91} c_{47}+\kappa_{2}^{91} c_{45}+\kappa_{3}^{91} c_{51}+\kappa_{4}^{91} c_{46}.\\
\end{align*}
}
\begin{figure}[h!]
    \centering
    \includegraphics[width=1.0\linewidth]{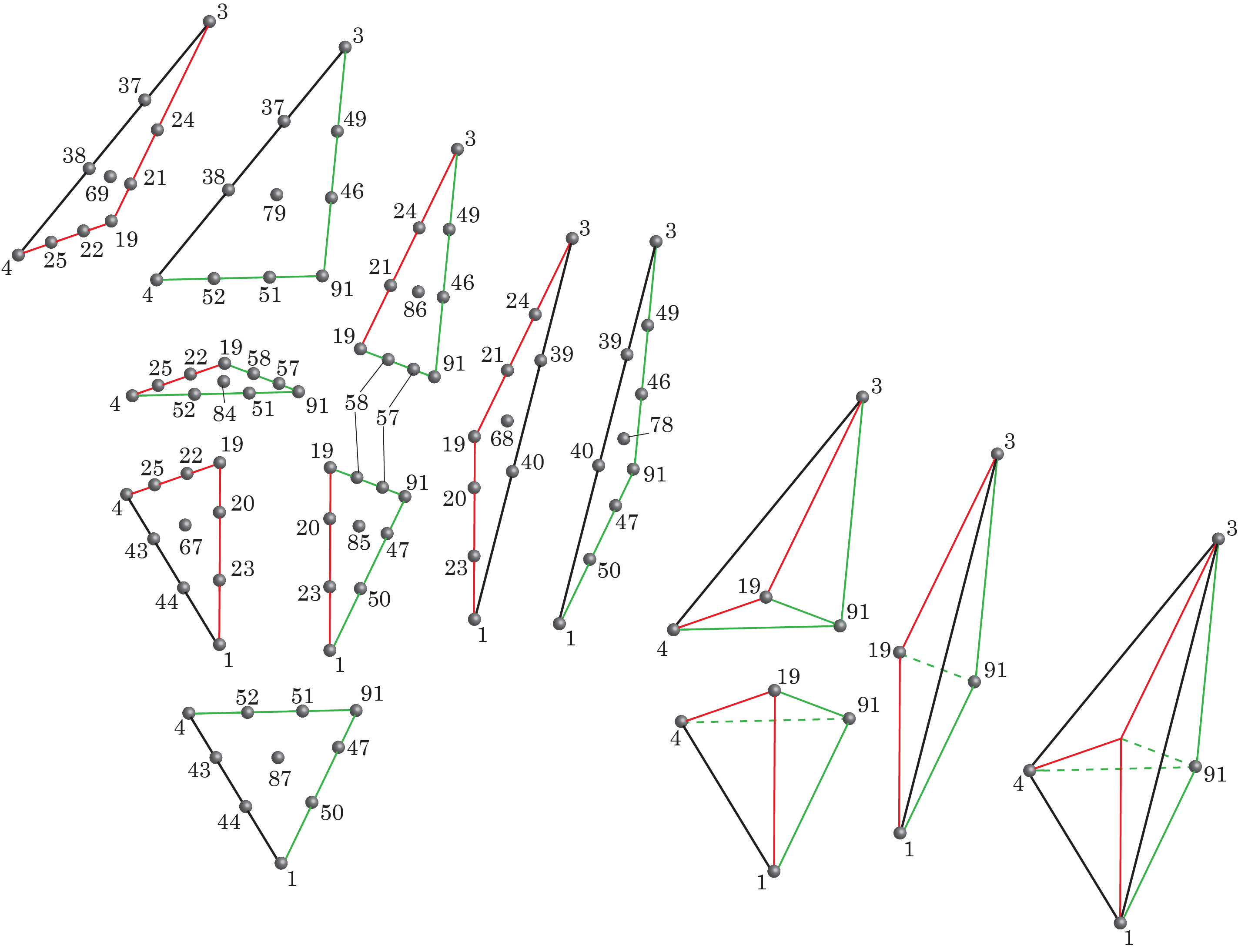}
    \caption{An exploded view of the subtetrahedron and subsequent subtetrahedra formed by vertices $\vertex^4,\vertex^1,\vertex^3$.}
    \label{fig:tet_413_exploded}
\end{figure}
\begin{figure}[h]
    \centering
    \includegraphics[width=1.0\linewidth]{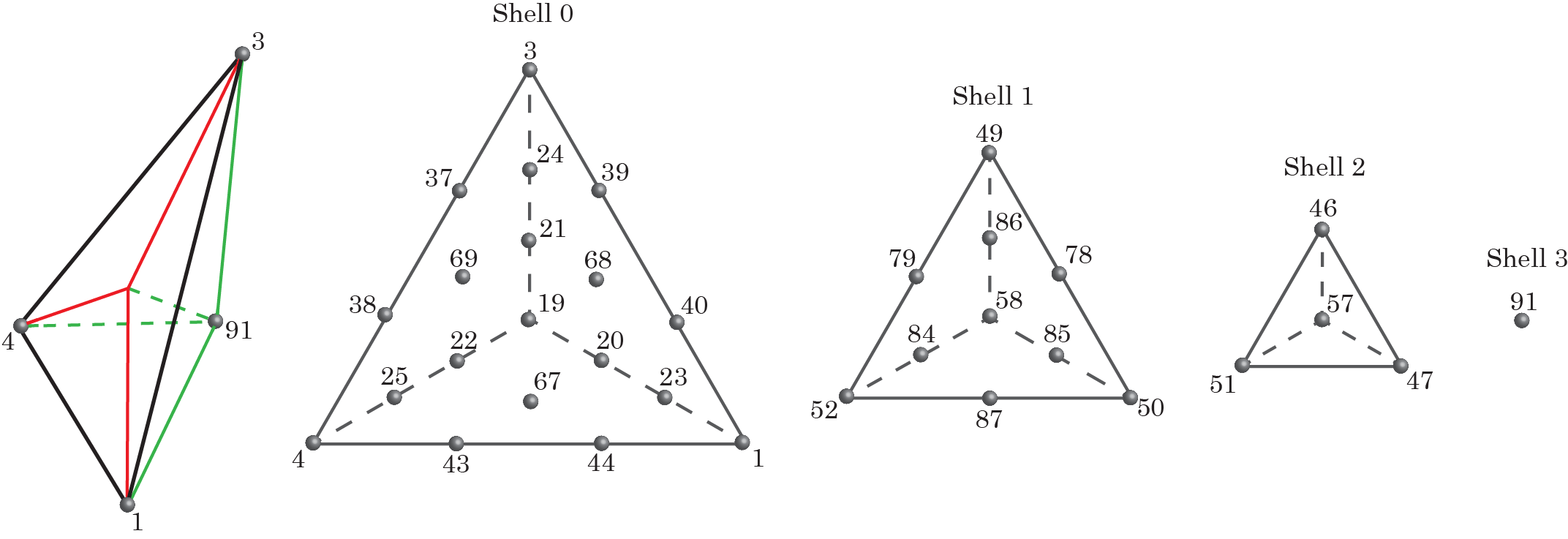}
    \caption{Reorganizing the coefficients of the subtetrahedron and subsequent subtetrahedra formed by vertices $\vertex^4,\vertex^1,\vertex^3$ into a `shell' format.}
    \label{fig:tet_413_shells}
\end{figure}

\noindent Coefficients for the side subtetrahedron (413):\\
\noindent Nodes:
\begin{align*}
    &\vertex^{19} =\sigma_{1}^{19}\vertex^{4}+\sigma_{2}^{19}\vertex^{1}+\sigma_{3}^{19}\vertex^{3}, \quad
    \vertex^{91} = \kappa_{1}^{91}\vertex^{1}+\kappa_{2}^{91}\vertex^{2}+\kappa_{3}^{91}\vertex^{4}+\kappa_{4}^{91}\vertex^{3}.
\end{align*}
{\allowdisplaybreaks
\begin{align*}
\text{Shell 0:}\\
    &c_{1} = f(\vertex^{1}), \quad c_{3} = f(\vertex^{2}), \quad c_{4} = f(\vertex^{3}),\\
    &c_{19} = \sigma_{1}^{19}c_{22}+\sigma_{2}^{19}c_{20}+\sigma_{3}^{19}c_{21}, \quad c_{20} = \sigma_{1}^{19}c_{67}+\sigma_{2}^{19}c_{23}+\sigma_{3}^{19}c_{68},\\
    &c_{21} = \sigma_{1}^{19}c_{69}+\sigma_{2}^{19}c_{68}+\sigma_{3}^{19}c_{24}, \quad c_{22} = \sigma_{1}^{19}c_{25}+\sigma_{2}^{19}c_{67}+\sigma_{3}^{19}c_{69},\\
    &c_{23} = \frac{D f(\vertex^{1})\cdot \overrightarrow{\vertex^{1}\vertex^{19}}}{3}+f(\vertex^{1}), \quad c_{24} = \frac{D f(\vertex^{3})\cdot \overrightarrow{\vertex^{3}\vertex^{19}}}{3}+f(\vertex^{3}), \quad c_{25} = \frac{D f(\vertex^{4})\cdot \overrightarrow{\vertex^{4}\vertex^{19}}}{3}+f(\vertex^{4}),\\
    &c_{37} = \frac{D f(\vertex^{3})\cdot \overrightarrow{\vertex^{3}\vertex^{4}}}{3}+f(\vertex^{3}), \quad c_{38} = \frac{D f(\vertex^{4})\cdot \overrightarrow{\vertex^{4}\vertex^{3}}}{3}+f(\vertex^{4}), \quad c_{39} = \frac{D f(\vertex^{3})\cdot \overrightarrow{\vertex^{3}\vertex^{1}}}{3}+f(\vertex^{3}),\\
    &c_{40} = \frac{D f(\vertex^{1})\cdot \overrightarrow{\vertex^{1}\vertex^{3}}}{3}+f(\vertex^{1}), \quad c_{43} = \frac{D f(\vertex^{4})\cdot \overrightarrow{\vertex^{4}\vertex^{1}}}{3}+f(\vertex^{4}), \quad c_{44} = \frac{D f(\vertex^{1})\cdot \overrightarrow{\vertex^{1}\vertex^{4}}}{3}+f(\vertex^{1}),\\
    &c_{67}  = \frac{4}{6a_{3}}D_{u_{e}}f(\eta_{e})-\frac{1}{2}(c_{25}+c_{23})-\frac{a_{1}}{2a_{3}}(c_{4}+2c_{43}+c_{44})-\frac{a_{2}}{2a_{3}}(c_{1}+2c_{44}+c_{43}),\\
    &\text{where $a = (-1/2,-1/2,1)$ and $\eta_{e} = (1/2,1/2,0)$},\\
    &c_{68}  = \frac{4}{6a_{3}}D_{u_{e}}f(\eta_{e})-\frac{1}{2}(c_{23}+c_{24})-\frac{a_{1}}{2a_{3}}(c_{1}+2c_{40}+c_{39})-\frac{a_{2}}{2a_{3}}(c_{3}+2c_{39}+c_{40}),\\
    &\text{where $a = (-1/2,-1/2,1)$ and $\eta_{e} = (1/2,1/2,0)$},\\
    &c_{69} = \frac{4}{6a_{3}}D_{u_{e}}f(\eta_{e})-\frac{1}{2}(c_{24}+c_{25})-\frac{a_{1}}{2a_{3}}(c_{3}+2c_{37}+c_{38})-\frac{a_{2}}{2a_{3}}(c_{4}+2c_{38}+c_{37}),\\
    &\text{where $a = (-1/2,-1/2,1)$ and $\eta_{e} = (1/2,1/2,0)$}.\\
    \text{Shell 1:}\\
    &c_{58} = \sigma_{1}^{19} c_{84}+\sigma_{2}^{19} c_{85}+\sigma_{3}^{19} c_{86}, \quad c_{84} = \sigma_{1}^{19}c_{52}+\sigma_{2}^{19}c_{87}+\sigma_{3}^{19}c_{79},\\
    &c_{85} = \sigma_{1}^{19}c_{87}+\sigma_{2}^{19}c_{50}+\sigma_{3}^{19}c_{78}, \quad c_{86}  = \sigma_{1}^{19}c_{79}+\sigma_{2}^{19}c_{78}+\sigma_{3}^{19}c_{49},\\
    &c_{49} = \frac{D f(\vertex^{3})\cdot \overrightarrow{\vertex^{3}\vertex^{91}}}{3}+f(\vertex^{3}), \quad c_{50} = \frac{D f(\vertex^{1})\cdot \overrightarrow{\vertex^{1}\vertex^{91}}}{3}+f(\vertex^{1}), \quad c_{52} = \frac{D f(\vertex^{4})\cdot \overrightarrow{\vertex^{4}\vertex^{91}}}{3}+f(\vertex^{4}),\\
    &c_{78} = \frac{4}{6a_{3}}D_{u_{e}}f(\eta_{e})-\frac{1}{2}(c_{85}+c_{86})-\frac{a_{1}}{2a_{3}}(c_{1}+2c_{40}+c_{39})-\frac{a_{2}}{2a_{3}}(c_{3}+2c_{39}+c_{40}),\\
    &\text{where $a = (-1/2,-1/2,1)$ and $\eta_{e} = (1/2,1/2,0)$},\\
    &c_{79} = \frac{4}{6a_{3}}D_{u_{e}}f(\eta_{e})-\frac{1}{2}(c_{49}+c_{52})-\frac{a_{1}}{2a_{3}}(c_{3}+2c_{37}+c_{38})-\frac{a_{2}}{2a_{3}}(c_{4}+2c_{38}+c_{37}),\\
    &\text{where $a = (-1/2,-1/2,1)$ and $\eta_{e} = (1/2,1/2,0)$},\\
    &c_{87} = \frac{4}{6a_{3}}D_{u_{e}}f(\eta_{e})-\frac{1}{2}(c_{25}+c_{23})-\frac{a_{1}}{2a_{3}}(c_{4}+2c_{43}+c_{44})-\frac{a_{2}}{2a_{3}}(c_{1}+2c_{44}+c_{43}),\\
    &\text{where $a = (-1/2,-1/2,1)$ and $\eta_{e} = (1/2,1/2,0)$}.\\
    \text{Shell 2:}\\
    &c_{57} = \sigma_{1}^{19} c_{51}+\sigma_{2}^{19} c_{47}+\sigma_{3}^{19} c_{46},\\
    &c_{46} = \kappa_{1}^{91} c_{78}+\kappa_{2}^{91} c_{77}+\kappa_{3}^{91} c_{79}+\kappa_{4}^{91} c_{49}, \quad c_{47} = \kappa_{1}^{91} c_{50}+\kappa_{2}^{91} c_{73}+\kappa_{3}^{91} c_{87}+\kappa_{4}^{91} c_{78},\\
    &c_{51} = \kappa_{1}^{91} c_{87}+\kappa_{2}^{91} c_{80}+\kappa_{3}^{91} c_{52}+\kappa_{4}^{91} c_{79}.\\
    \text{Shell 3:}\\
    &c_{91} = \kappa_{1}^{91} c_{47}+\kappa_{2}^{91} c_{45}+\kappa_{3}^{91} c_{51}+\kappa_{4}^{91} c_{46}.\\
\end{align*}
}
\begin{figure}[h!]
    \centering
    \includegraphics[width=1.0\linewidth]{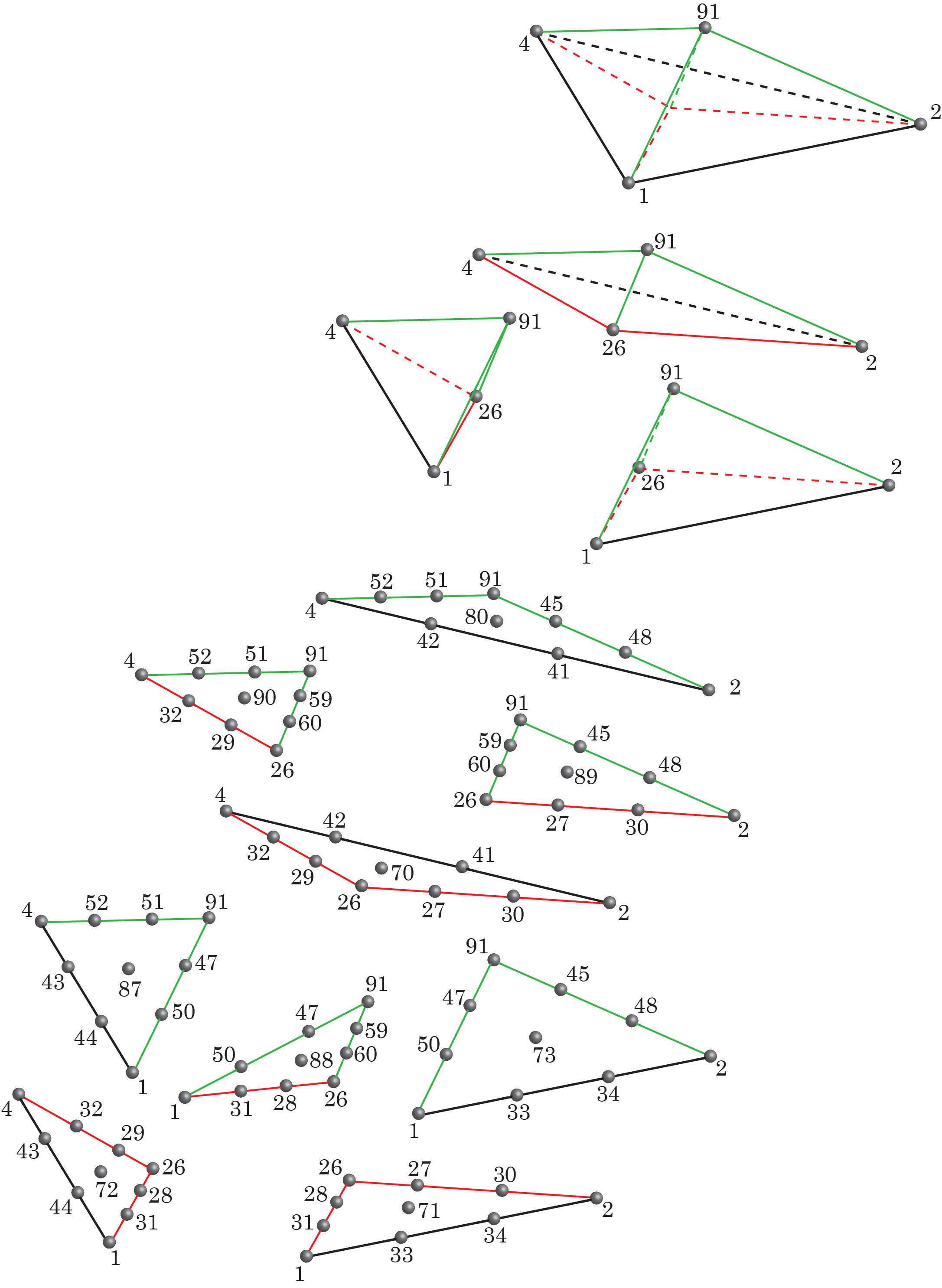}
    \caption{An exploded view of the subtetrahedron and subsequent subtetrahedra formed by vertices $\vertex^2,\vertex^1,\vertex^4$.}
    \label{fig:tet_214_exploded}
\end{figure}
\begin{figure}[h]
    \centering
    \includegraphics[width=1.0\linewidth]{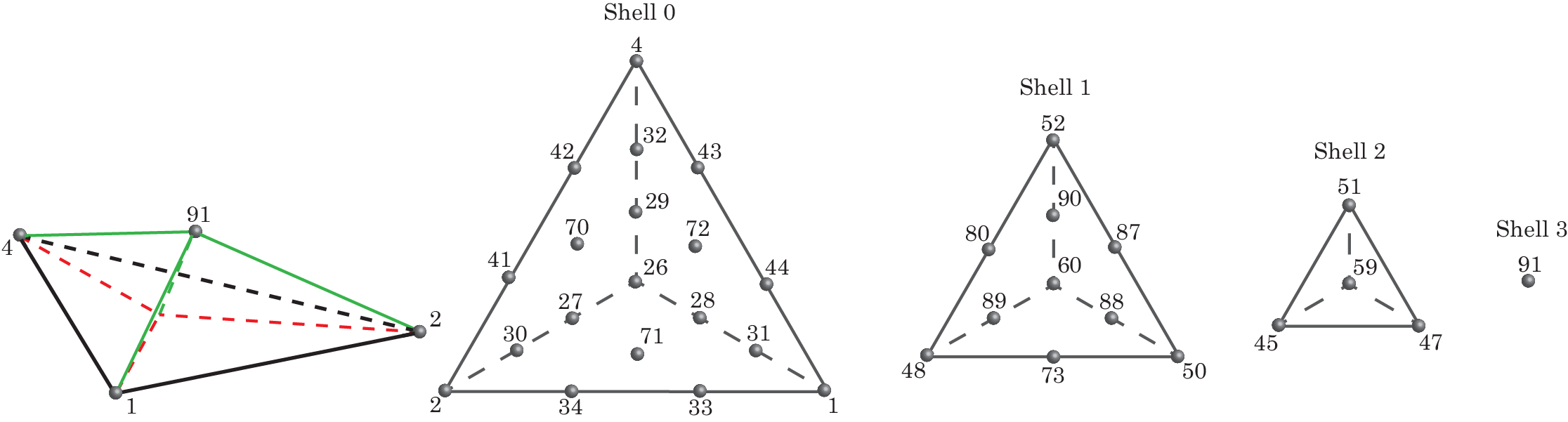}
    \caption{Reorganizing the coefficients of the subtetrahedron and subsequent subtetrahedra formed by vertices $\vertex^2,\vertex^1,\vertex^4$ into a `shell' format.}
    \label{fig:tet_214_shells}
\end{figure}

\noindent Bottom subtetrahedron (214):\\
\noindent Nodes:
\begin{align*}
    &\vertex^{26} =\sigma_{1}^{26}\vertex^{2}+\sigma_{2}^{26}\vertex^{1}+\sigma_{3}^{26}\vertex^{4}, \quad \vertex^{91} = \kappa_{1}^{91}\vertex^{1}+\kappa_{2}^{91}\vertex^{2}+\kappa_{3}^{91}\vertex^{4}+\kappa_{4}^{91}\vertex^{3}.
\end{align*}
{\allowdisplaybreaks
\begin{align*}
\text{Shell 0:}\\
    &c_{1} = f(\vertex^{1}), \quad c_{2} = f(\vertex^{2}), \quad c_{4} = f(\vertex^{4}),\\
    &c_{26} = \sigma_{1}^{26}c_{27}+\sigma_{2}^{26}c_{28}+\sigma_{3}^{26}c_{29}, \quad c_{27} = \sigma_{1}^{26}c_{30}+\sigma_{2}^{26}c_{71}+\sigma_{3}^{26}c_{70},\\
    &c_{28} = \sigma_{1}^{26}c_{71}+\sigma_{2}^{26}c_{31}+\sigma_{3}^{26}c_{72}, \quad c_{29} = \sigma_{1}^{26}c_{70}+\sigma_{2}^{26}c_{72}+\sigma_{3}^{26}c_{32},\\
    &c_{30} = \frac{D f(\vertex^{2})\cdot \overrightarrow{\vertex^{2}\vertex^{26}}}{3}+f(\vertex^{2}), \quad c_{31} = \frac{D f(\vertex^{1})\cdot \overrightarrow{\vertex^{1}\vertex^{26}}}{3}+f(\vertex^{1}), \quad c_{32} = \frac{D f(\vertex^{4})\cdot \overrightarrow{\vertex^{4}\vertex^{26}}}{3}+f(\vertex^{4}),\\
    &c_{33} = \frac{D f(\vertex^{1})\cdot \overrightarrow{\vertex^{1}\vertex^{2}}}{3}+f(\vertex^{1}), \quad c_{34} = \frac{D f(\vertex^{2})\cdot \overrightarrow{\vertex^{2}\vertex^{1}}}{3}+f(\vertex^{2}), \quad c_{41} = \frac{D f(\vertex^{2})\cdot \overrightarrow{\vertex^{2}\vertex^{4}}}{3}+f(\vertex^{2}),\\
    &c_{42} = \frac{D f(\vertex^{4})\cdot \overrightarrow{\vertex^{4}\vertex^{2}}}{3}+f(\vertex^{4}), \quad c_{43} = \frac{D f(\vertex^{4})\cdot \overrightarrow{\vertex^{4}\vertex^{1}}}{3}+f(\vertex^{4}), \quad c_{44} = \frac{D f(\vertex^{1})\cdot \overrightarrow{\vertex^{1}\vertex^{4}}}{4}+f(\vertex^{1}),\\
    &c_{70} = \frac{4}{6a_{3}}D_{u_{e}}f(\eta_{e})-\frac{1}{2}(c_{32}+c_{30})-\frac{a_{1}}{2a_{3}}(c_{4}+2c_{42}+c_{41})-\frac{a_{2}}{2a_{3}}(c_{2}+2c_{41}+c_{42}),\\
    &\text{where $a = (-1/2,-1/2,1)$ and $\eta_{e} = (1/2,1/2,0)$},\\
    &c_{71} = \frac{4}{6a_{3}}D_{u_{e}}f(\eta_{e})-\frac{1}{2}(c_{30}+c_{31})-\frac{a_{1}}{2a_{3}}(c_{2}+2c_{34}+c_{33})-\frac{a_{2}}{2a_{3}}(c_{1}+2c_{33}+c_{34}),\\
    &\text{where $a = (-1/2,-1/2,1)$ and $\eta_{e} = (1/2,1/2,0)$},\\
    &c_{72} = \frac{4}{6a_{3}}D_{u_{e}}f(\eta_{e})-\frac{1}{2}(c_{31}+c_{32})-\frac{a_{1}}{2a_{3}}(c_{1}+2c_{44}+c_{43})-\frac{a_{2}}{2a_{3}}(c_{4}+2c_{43}+c_{44}),\\
    &\text{where $a = (-1/2,-1/2,1)$ and $\eta_{e} = (1/2,1/2,0)$}.\\
    \text{Shell 1:}\\
    &c_{60} = \sigma_{1}^{26} c_{89}+\sigma_{2}^{26} c_{88}+\sigma_{3}^{26} c_{90}, \quad c_{88} = \sigma_{1}^{26} c_{73}+\sigma_{2}^{26} c_{50}+\sigma_{3}^{26} c_{87},\\
    &c_{89} = \sigma_{1}^{26} c_{48}+\sigma_{2}^{26} c_{73}+\sigma_{3}^{26} c_{80}, \quad c_{90} = \sigma_{1}^{26} c_{80}+\sigma_{2}^{26} c_{87}+\sigma_{3}^{26} c_{52},\\
    &c_{48} = \frac{D f(\vertex^{2})\cdot \overrightarrow{\vertex^{2}\vertex^{91}}}{3}+f(\vertex^{2}), \quad c_{50} = \frac{D f(\vertex^{1})\cdot \overrightarrow{\vertex^{1}\vertex^{91}}}{3}+f(\vertex^{1}), \quad c_{52} = \frac{D f(\vertex^{4})\cdot \overrightarrow{\vertex^{4}\vertex^{91}}}{4}+f(\vertex^{4}),\\
    &c_{73} = \frac{4}{6a_{3}}D_{u_{e}}f(\eta_{e})-\frac{1}{2}(c_{50}+c_{48})-\frac{a_{1}}{2a_{3}}(c_{2}+2c_{34}+c_{33})-\frac{a_{2}}{2a_{3}}(c_{1}+2c_{33}+c_{34}),\\
    &\text{where $a = (-1/2,-1/2,1)$ and $\eta_{e} = (1/2,1/2,0)$},\\
    &c_{80} = \frac{4}{6a_{2}}D_{u_{e}}f(\eta_{e})-\frac{1}{2}(c_{52}+c_{48})-\frac{a_{1}}{2a_{3}}(c_{4}+2c_{42}+c_{41})-\frac{a_{2}}{2a_{3}}(c_{2}+2c_{41}+c_{42}),\\
    &\text{where $a = (-1/2,-1/2,1)$ and $\eta_{e} = (1/2,1/2,0)$},\\
    &c_{87} = \frac{4}{6a_{1}}D_{u_{e}}f(\eta_{e})-\frac{1}{2}(c_{50}+c_{52})-\frac{a_{1}}{2a_{3}}(c_{1}+2c_{44}+c_{43})-\frac{a_{2}}{2a_{3}}(c_{4}+2c_{43}+c_{44}),\\
    &\text{where $a = (-1/2,-1/2,1)$ and $\eta_{e} = (1/2,1/2,0)$}.\\
    \text{Shell 2:}\\
    &c_{59} = \sigma_{1}^{26} c_{45}+\sigma_{2}^{26} c_{47}+\sigma_{3}^{26} c_{51},\\
    &c_{45} = \kappa_{1}^{91} c_{73}+\kappa_{2}^{91} c_{48}+\kappa_{3}^{91} c_{80}+\kappa_{4}^{91} c_{77}, \quad c_{47} = \kappa_{1}^{91} c_{50}+\kappa_{2}^{91} c_{73}+\kappa_{3}^{91} c_{87}+\kappa_{4}^{91} c_{78},\\
    &c_{51} = \kappa_{1}^{91} c_{87}+\kappa_{2}^{91} c_{80}+\kappa_{3}^{91} c_{52}+\kappa_{4}^{91} c_{79}.\\
    \text{Shell 3:}\\
    &c_{91} = \kappa_{1}^{91} c_{47}+\kappa_{2}^{91} c_{45}+\kappa_{3}^{91} c_{51}+\kappa_{4}^{91} c_{46}.\\
\end{align*}
}
\pagebreak
\clearpage

%{\footnotesize\bibliography{technical-refs}}
{\footnotesize

}

\end{document}